\renewcommand{\d}{\mathrm{d}} % integral d
\renewcommand{\H}{\mathcal{H}} % Hilbertraum
\newcommand{\N}{\mathbb{N}} % natuerliche Zahlen
\newcommand{\Z}{\mathbb{Z}} % ganze Zahlen
\newcommand{\R}{\mathbb{R}} % reelle Zahlen
\newcommand{\process}[1]{\left(#1\right)_{t\ge 0}}
\DeclareMathOperator{\G}{\Gamma }
\DeclareMathOperator{\Pro}{\mathbb{P}}
\DeclareMathOperator{\E}{\mathbb{E}}
\DeclareMathOperator{\V}{\mathbb{V}}
\DeclareMathOperator{\Lamp}{\mathcal{L}}
\theoremstyle{definition}
\theoremstyle{plain}
\newtheorem{Definition}{Definition}[section]
\newtheorem{Theorem}[Definition]{Theorem}
\newtheorem{Lemma}[Definition]{Lemma}
\newtheorem{Corollary}[Definition]{Corollary}
\theoremstyle{remark}
\begin{document}

\title{Persistence probabilities of a smooth self-similar anomalous diffusion process}
\author{Frank Aurzada and  Pascal Mittenb\"uhler}
\date{\today}
\maketitle
\begin{abstract}
We consider the persistence probability of a certain fractional Gaussian process $M^H$ that appears in the Mandelbrot-van Ness representation of fractional Brownian motion. This process is self-similar and smooth. We show that the persistence exponent of $M^H$ exists and is continuous in the Hurst parameter $H$. Further, the asymptotic behaviour of the persistence exponent for $H\downarrow0$ and $H\uparrow1$, respectively, is studied. Finally, for $H\to 1/2$, the suitably renormalized process converges to a non-trivial limit with non-vanishing persistence exponent, contrary to the fact that $M^{1/2}$ vanishes.
\end{abstract}

\bigskip
{\bf Keywords:} anomalous diffusion; fractional Brownian motion; fractionally integrated Brownian motion; Gaussian process; one-sided exit problem; persistence; Rie\-mann-Liouville process; stationary process; zero crossing

\bigskip
{\bf 2020 Mathematics Subject Classification:} 60G15; 60G22

\section{Introduction and main results}

This paper is concerned with the persistence exponent of a certain class of anomalous diffusion processes. Anomalous diffusion processes are an important tool in modelling physical systems \cite{bouchaud,metzler,rosso}. The persistence probability of a real-valued process $(X_t)_{t\ge 0}$ is given by
$$
\Pro\left[\sup_{t\in[0,T]} X_t<1\right].
$$
For self-similar processes, one expects the behaviour of this quantity to be of order $T^{-\theta(X)+o(1)}$, when $T\to\infty$, for some $\theta(X)\in(0,\infty)$. If this is the case we say that the persistence exponent of $X$ exists and equals $\theta(X)$.
We refer to \cite{bray2013persistence} for an overview on the relevance of this question to physical systems and to \cite{andersen2015persistence} for a survey of the mathematics literature.

In this work, we deal with anomalous diffusion processes, also called fractional processes, and this means we have to start by recalling what is presumably the most important fractional process, namely fractional Brownian motion (FBM): This is a continuous, centred Gaussian process $(B^H_t)_{t\ge 0}$ with covariance
$$
\E[ B^H_t B^H_s ] = \frac{1}{2} ( t^{2H} + s^{2H} - |t-s|^{2H}), \qquad t,s\ge 0,
$$
where $H\in(0,1)$ is the so-called Hurst parameter. For $H=\frac{1}{2}$, FBM is just usual Brownian motion, while for $H\neq\frac{1}{2}$ the process has stationary but non-independent increments.

The process of interest in this paper stems from the Mandelbrot-van Ness integral representation of fractional Brownian motion given by 
\begin{equation} \label{eq:VanNess2}
\sigma_H B_t^H = \int_0^t (t-s)^{H-\tfrac{1}{2}}\d B_s + \int_{-\infty}^0 (t-s)^{H-\tfrac{1}{2}}-(-s)^{H-\tfrac{1}{2}}\d B_s,
\end{equation}
where $(B_s)_{s\in\R}$ in the stochastic integral is a usual (two-sided) Brownian motion. The derivation of the normalisation constant
\begin{equation*}
\sigma_H\coloneqq\frac{\G(H+\tfrac{1}{2})}{\sqrt{2H\sin(\pi H)\G(2H)}}
\end{equation*}
can be found e.g.\ in Theorem 1.3.1 of \cite{mishura2008stochastic}.
The two processes appearing in the Mandelbrot-van Ness representation
$$
R_t^H:=\int_0^t (t-s)^{H-\tfrac{1}{2}}\d B_s \qquad \text{and}\qquad  M_t^H := \int_{-\infty}^0 (t-s)^{H-\tfrac{1}{2}}-(-s)^{H-\tfrac{1}{2}}\d B_s
$$
are independent. We stress that $R^H$ can be defined for all parameters $H>0$, while $M^H$ only makes sense for $H\in(0,1)$. We also note that for $H=\frac{1}{2}$, $R^{1/2}=B^{1/2}$ is a usual Brownian motion, while $M^{1/2}$ vanishes.

Further, let us mention that $B^H$, $R^H$, and $M^H$ are $H$-self-similar, respectively. It is simple to show that $B^H$ and $R^H$ have continuous versions, in fact even $\gamma$-H\"older continuous for any $\gamma<H<1$, while these processes are not $H$-H\"older continuous. Contrary, $M^H$ turns out to be a smooth (i.e.\ infinitely differentiable) process. Therefore, $M^H$ is a self-similar, but smooth process, which makes it an interesting object in modelling physical systems.

The persistence exponent of $B^H$ was obtained by Molchan \cite{molchan1999maximum} (for subsequent refinements see \cite{aecp,agpp,pengrao}) to the end that
$$
\theta(B^H)=1-H.
$$
The persistence exponent of fractionally integrated Brownian motion $R^H$ (also called Riemann-Liouville process) was the subject of the recent study \cite{aurzada2022asymptotics}. There, it was shown that the function $H\mapsto\theta(R^H)$ is continuous and tends to infinity when $H\downarrow0$. Further, the limiting behaviour when $H\to\infty$ is investigated in the papers \cite{ad,ps}.

We will thus turn our attention to the less studied process $M^H$ for $H\in(0,\tfrac{1}{2})\cup (\tfrac{1}{2},1)$ and ask for the existence of the persistence exponent
\begin{equation}\label{eq:thetaM}
    \theta(M^H)\coloneqq\lim_{T\rightarrow\infty}-\frac{1}{\log T}\log\Pro\left[\sup_{t\in[0,T]}M_t^H<1\right],
\end{equation}
its continuity properties as well as its asymptotic behaviour for $H\downarrow0$, $H\uparrow1$, and $H\to\frac{1}{2}$, respectively.
Apart from trying to understand the persistence behaviour of the fractional process $M^H$, the goal is to shed light on the relation of the persistence exponents of $B^H$ (studied in \cite{molchan1999maximum} and subsequent papers), $R^H$ (studied in \cite{ad,ps,aurzada2022asymptotics}), and $M^H$. 
The following two theorems on existence, continuity, and asymptotic behaviour of $\theta(M^H)$ are the main objective of this work.

\begin{Theorem}\label{conv}\label{Theorem:main}
The limit in \eqref{eq:thetaM} exists for any $H\in(0,\tfrac{1}{2})\cup (\tfrac{1}{2},1)$. It has the following asymptotic behaviour:
\begin{enumerate}
    \item[\textnormal{a)}] $\lim_{H\downarrow0}\frac{\theta(M^H)}{H}=1$
    \item[\textnormal{b)}] $\lim_{H\uparrow1}\frac{\theta(M^H)}{1-H}=1.$
\end{enumerate}
\end{Theorem}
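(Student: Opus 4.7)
The plan is to pass from the self-similar process $M^H$ to a stationary Gaussian process via the Lamperti transformation. Set
\[
Y^H_u := e^{-Hu}\,M^H_{e^u}, \qquad u\in\R.
\]
By $H$-self-similarity of $M^H$, the process $Y^H$ is centred and stationary Gaussian, and its covariance $r_H(u):=\E[Y^H_0 Y^H_u]$ is obtained explicitly from that of $M^H$. Smoothness of $M^H$ gives smoothness of $r_H$, and its explicit form yields polynomial decay as $|u|\to\infty$; in particular the condition $r_H(u)\log u\to 0$ holds, so the classical result of Dembo--Mukherjee on persistence of stationary Gaussian processes ensures the existence of
\[
\lambda(Y^H) := \lim_{S\to\infty} -\frac{1}{S}\log\Pro\!\left[\sup_{u\in[0,S]} Y^H_u < 0\right].
\]
Using self-similarity of $M^H$ and stationarity of $Y^H$, the quantity $\Pro[\sup_{t\in[1,T]} M^H_t<1]$ equals $\Pro[Y^H_u < e^{-Hu} \text{ for all } u\in[0,\log T]]$, which is sandwiched between $\Pro[\sup_{u\in[0,\log T]}Y^H_u<1]$ and $\Pro[\sup_{u\in[0,\log T]}Y^H_u<T^{-H}]$. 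Since the exponent of a stationary Gaussian persistence probability is insensitive to shifting the barrier by $T^{-H}\to 0$ (by standard Gaussian concentration), both sandwich terms scale as $T^{-\lambda(Y^H)(1+o(1))}$; the interval $[0,1]$ contributes only a bounded multiplicative factor by continuity of $M^H$. This gives $\theta(M^H)=\lambda(Y^H)$ and, in particular, that the limit in \eqref{eq:thetaM} exists.

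For the boundary asymptotics in parts (a) and (b), the plan is to study the $H$-asymptotics of the covariance family $\{r_H\}$ and, after suitable normalisation, to identify a limiting process towards which $Y^H$ degenerates at each boundary; the rescaling encodes the leading $H$ (respectively $1-H$) behaviour of the exponent. One then constructs two comparison stationary Gaussian processes $Z^{H,\pm}$ sandwiching $Y^H$ via Slepian's inequality, whose persistence rates $\lambda(Z^{H,\pm})$ are computable or at least controllable to leading order. Showing $\lambda(Z^{H,\pm}) = H(1+o(1))$ as $H\downarrow 0$ and $\lambda(Z^{H,\pm}) = (1-H)(1+o(1))$ as $H\uparrow 1$, and transferring these bounds through $\theta(M^H)=\lambda(Y^H)$, yields the two asymptotic statements.

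The hardest part will be pinning down the sharp leading constant $1$ at both boundaries. Persistence exponents of stationary Gaussian processes depend only in a delicate, not-obviously-continuous way on the covariance, so abstract convergence of $r_H$ is not enough, and the comparison processes have to be chosen carefully. An additional subtlety in the $H\downarrow 0$ regime is the blow-up of $\sigma_H$: the natural amplitude of $M^H$ and the correlation structure of $Y^H$ degenerate in opposite directions, so the Slepian comparison processes have to be engineered to reflect the right balance between these scales.
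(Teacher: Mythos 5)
Your first step---passing to the Lamperti transform and identifying $\theta(M^H)$ with the persistence exponent of the stationary process---is the same as the paper's, but the one sentence you use to justify it is precisely where the content lies. The event $\{\sup_{t\in[1,T]}M^H_t<1\}$ does reduce to $\{Y^H_u<e^{-Hu},\,u\in[0,\log T]\}$, but ``the exponent is insensitive to shifting the barrier by $T^{-H}\to0$ by standard Gaussian concentration'' is not a proof: comparing a barrier at $1$ (or at $e^{-Hu}$) with a barrier at $0$ for a process started at $M^H_0=0$ requires a Cameron--Martin shift, and the cost of that shift must be shown to be $T^{o(1)}$. The paper does this via Theorem~1 of \cite{molchan2006problem} (Corollary~\ref{Cor:Mol}), whose key hypothesis is the existence of an element $\phi$ of the RKHS of $M^H$ with $\phi(t)\geq1$ for all $t\geq1$ and $T$-independent norm; verifying this requires two genuinely different explicit constructions for $H<\tfrac12$ and $H>\tfrac12$ (Lemma~\ref{lem:pecoincide}). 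Separately, the existence of $\lambda(Y^H)$ for the stationary process rests on nonnegativity of its correlation function (subadditivity via Slepian), which in turn needs the integral representation \eqref{integralrepresentation} showing $g_H\geq0$; the decay condition ``$r_H(u)\log u\to0$'' you cite is not the relevant hypothesis. So the skeleton is right, but the two load-bearing verifications are missing.

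For parts a) and b) your proposal stops where the work begins. The two missing ideas are: (i) the correct rescaling is $\tau\mapsto\tau/H$ (resp.\ $\tau/(1-H)$), under which the correlation function $g_H$ of the Lamperti transform converges pointwise to $e^{-\tau}$, i.e.\ the rescaled processes converge to the Ornstein--Uhlenbeck process, whose persistence exponent is $1$ --- this identification is what produces the leading constant $1$, and you never name a limit process; (ii) the passage from convergence of correlation functions to convergence of persistence exponents is supplied by the continuity lemma of \cite{dembomukherjee1,dembomukherjee2,aurzada2023persistence} (Lemma~\ref{KilianLemma}), so your worry that ``abstract convergence of $r_H$ is not enough'' is misplaced: pointwise convergence \emph{plus} the uniform tail and local-regularity conditions \eqref{sum}--\eqref{frac} does suffice, and verifying those conditions (via Lemmas~\ref{Lemma:sigmatilde}, \ref{Lemma:c_H2}, \ref{Lemma:limsupsum}, \ref{Lemma:int}, and the lower bound $g_H(\tau)\geq e^{-\tau(1-H)}$ of Lemma~\ref{Conjecture:int2} near $H=1$) is where the actual estimates are needed. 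A Slepian sandwich with unspecified comparison processes $Z^{H,\pm}$ is only a restatement of the goal: Slepian converts a pointwise ordering of covariances into an ordering of probabilities, but you must still exhibit covariances that dominate/minorize $g_H(\tau/H)$ and whose exponents you can compute to leading order, and no such construction is given. As it stands, the asymptotic claims are unproven.
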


As a side remark, we note that the persistence exponent of the process $M^H$ exhibits the same limiting behaviour at $0$ and $1$ as that of the integrated fractional Brownian motion, cf.\ Theorem~1 in \cite{aurzada2022asymptotics}. We have no explanation for this coincidence at this point.

The next theorem deals with the situation at $H=\frac{1}{2}$. It shows that the persistence exponent, as a function of $H$, can be continuously extended to $(0,1)$, i.e.\ including the point $H=\frac{1}{2}$. At $H=\frac{1}{2}$, the value of the continuous extension turns out to be positive, which is surprising given that $M^{1/2}$ vanishes. There is a non-trivial limit process $M^{\ast,1/2}$, whose persistence exponent corresponds to the value of the continuous extension of $H\mapsto\theta(M^H)$ at $H=\frac{1}{2}$.

\begin{Theorem}
\label{thm:continuity}
The mapping $H\mapsto \theta(M^H)$ is continuous on $(0,\frac{1}{2})\cup (\frac{1}{2},1)$ and continuously extendable to the whole interval $(0,1)$ with strictly positive limit at $H=\tfrac{1}{2}$. The persistence exponent of the following process is the value of the continuous extension of $H\mapsto\theta(M^H)$ at $H=\frac{1}{2}$:
\begin{equation} \label{eqn:defnmast}
    M^{\ast,1/2}_t:=\int_0^\infty \log\left(1+\frac{t}{s}\right) \d B_s.
\end{equation}
\end{Theorem}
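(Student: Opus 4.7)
The approach uses the Lamperti transformation to convert each self-similar process into a stationary centred Gaussian one, and then applies a continuity-of-persistence-exponent principle for stationary Gaussian processes whose covariances vary continuously. The argument naturally splits into three parts: (i) continuity on the punctured interval, (ii) identification of the limit at $H=\tfrac{1}{2}$ through rescaling, (iii) strict positivity of the limiting exponent.

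\textbf{Continuity on $(0,\tfrac{1}{2})\cup(\tfrac{1}{2},1)$.} For each $H$ in this set, the process $Y^H_s := e^{-sH} M^H_{e^s}$ is centred stationary Gaussian by $H$-self-similarity, and one has $\theta(M^H) = -\lim_{T\to\infty} T^{-1}\log\Pro[\sup_{s\in[0,T]} Y^H_s < 0]$ modulo a contribution from $[0,1]$ that is negligible after normalisation. The covariance $r_H(\tau) := \E[Y^H_0 Y^H_\tau]$ has an explicit integral representation coming from the It\^o-isometry for $M^H$, and I would verify that $(H,\tau)\mapsto r_H(\tau)$ is jointly continuous with uniform tail decay on compact $H$-neighbourhoods bounded away from $\tfrac{1}{2}$. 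A Slepian-type sandwich argument (analogous to the one carried out for $R^H$ in \cite{aurzada2022asymptotics}) then yields $\theta(M^{H_n})\to\theta(M^H)$ whenever $H_n\to H\neq\tfrac{1}{2}$.

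\textbf{Rescaling and convergence near $H=\tfrac{1}{2}$.} Substituting $u=-s$ and factoring the integrand gives
$$M^H_t = \int_0^\infty u^{H-1/2}\bigl[(1+t/u)^{H-1/2} - 1\bigr]\,\d \tilde B_u$$
for a standard Brownian motion $\tilde B$. The expansion $(1+x)^{H-1/2}-1 = (H-\tfrac{1}{2})\log(1+x) + O\bigl((H-\tfrac{1}{2})^2\log^2(1+x)\bigr)$ suggests defining $\tilde M^H_t := (H-\tfrac{1}{2})^{-1} M^H_t$, and dominated convergence applied to the It\^o-isometry gives $\E[(\tilde M^H_t - M^{\ast,1/2}_t)^2]\to 0$ as $H\to\tfrac{1}{2}$ for every $t\ge 0$. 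Since $M^H$ is centred Gaussian, multiplication by a non-zero constant of arbitrary sign preserves the law of the supremum and in particular the persistence exponent, so $\theta(\tilde M^H)=\theta(M^H)$ for all $H\neq\tfrac{1}{2}$. The process $M^{\ast,1/2}$ is $\tfrac{1}{2}$-self-similar (by the substitution $s\mapsto cs$ in \eqref{eqn:defnmast}), so $Y^\ast_s := e^{-s/2} M^{\ast,1/2}_{e^s}$ is stationary, and the covariances $\tilde r_H(\tau)$ of $\tilde Y^H_s := e^{-sH}\tilde M^H_{e^s}$ converge to the covariance $r^\ast(\tau)$ of $Y^\ast$ uniformly in $\tau$ with appropriate tail decay. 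The sandwich argument from the first part then gives $\theta(M^H)=\theta(\tilde M^H)\to\theta(M^{\ast,1/2})$, proving both the continuous extension to the whole interval and the identification of its value at $H=\tfrac{1}{2}$.

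\textbf{Positivity and main obstacle.} Strict positivity $\theta(M^{\ast,1/2})>0$ follows from nondegeneracy: the integrand $\log(1+t/s)$ is square-integrable in $s$ for every $t>0$ (behaving like $t/s$ near infinity and like $\log(t/s)$ near zero), so $M^{\ast,1/2}$ is a well-defined centred Gaussian process with strictly positive variance at every $t>0$, and $Y^\ast$ is a non-constant stationary Gaussian process whose correlation function vanishes at infinity, which is sufficient for $\Pro[\sup_{s\in[0,T]} Y^\ast_s<0]$ to decay exponentially by standard arguments for stationary Gaussian processes. The principal obstacle throughout is the sandwiching step that converts $L^2$-convergence of the processes into convergence of persistence exponents at the logarithmic scale, in particular near $H=\tfrac{1}{2}$ where the self-similarity index itself varies. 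Self-similarity of both $\tilde M^H$ (with index $H$) and $M^{\ast,1/2}$ (with index $\tfrac{1}{2}$) is used to rescale long-time persistence events to a fixed compact interval, where uniform $C[0,T]$-convergence of the rescaled processes together with Borell-TIS concentration provide the required two-sided bounds.
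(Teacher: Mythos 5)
Your architecture agrees with the paper's: Lamperti transform, then a continuity principle for persistence exponents of GSPs with convergent correlation functions (Lemma~\ref{KilianLemma}), and your renormalisation $\tilde M^H=(H-\tfrac{1}{2})^{-1}M^H$ together with the expansion of $(1+x)^{H-1/2}-1$ is a clean alternative route to the limit process: the paper instead applies l'H\^{o}pital's rule twice to the ratio $g_H(\tau)=\bigl(\int_0^\infty K_0^H(u)^2\,\d u\bigr)^{-1}\int_0^\infty K_0^H(u)K_\tau^H(u)\,\d u$, whose numerator and denominator are both $O((H-\tfrac{1}{2})^2)$ --- which is exactly your renormalisation in disguise. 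The genuine gap is the step you defer: the ``uniform tail decay'' of the correlation functions as $H\to\tfrac{1}{2}$, i.e.\ condition \eqref{sum} of the continuity lemma, is the actual crux and is not routine. Away from $H=\tfrac{1}{2}$ one has $g_H(\tau)\le\frac{\tilde\sigma^2(H)}{\tilde\sigma^2(H)-1}\,c_H(\tau)$ with $\tilde\sigma^2(H)$ bounded away from $1$, which gives a uniform exponential bound; but $\tilde\sigma^2(H)\to1$ as $H\to\tfrac{1}{2}$, so this bound degenerates precisely in the regime where you need it. The paper replaces it by a dominated-convergence argument for the tail sums themselves (Lemma~\ref{Lemma:L2}c combined with a further application of l'H\^{o}pital to $\sum_{\tau\ge L}\int_0^\infty K_0^H(u)K_{\tau/\ell}^H(u)\,\d u$), and this is the technical heart of the $H\to\tfrac{1}{2}$ case; asserting ``uniform convergence with appropriate tail decay'' assumes what must be proved.

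The second gap is in your positivity argument. That $Y^\ast$ is a non-degenerate GSP whose correlation function vanishes at infinity is \emph{not} sufficient for a strictly positive persistence exponent: nonnegative correlations that decay too slowly (e.g.\ logarithmically --- such functions are positive definite by P\'olya's criterion) yield persistence probabilities decaying subexponentially, hence exponent $0$. The condition actually used in the paper is summability of the correlation function (Lemma~3.2 of \cite{aurzada2023persistence}), which requires a quantitative decay estimate; the paper proves $g_{\ast,1/2}(\tau)\le Ce^{-\tau/6}$ via Young's inequality. Your observation that $s\mapsto\log(1+t/s)$ is square-integrable only shows that $M^{\ast,1/2}$ is well defined, not that the correlation of its Lamperti transform is integrable. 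Finally, you pass silently between the polynomial-scale exponent of the self-similar process (event $\sup_{[0,T]}<1$) and the exponential-scale exponent of its Lamperti transform (event $\sup_{[0,T]}<0$); this equivalence is not automatic and is established in the paper via Molchan's theorem (Corollary~\ref{Cor:Mol}), which requires exhibiting an element $\phi$ of the RKHS with $\phi(t)\ge1$ for all $t\ge1$ --- both for each $M^H$ and for the limit process $M^{\ast,1/2}$.
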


The proof of these results is similar in methodology to \cite{aurzada2022asymptotics}. The first step is to transfer to the problem to the stationary setup via time-changing the process: Define the stationary Gaussian process (GSP):
$$
(\Lamp M^H)_\tau:=\frac{1}{\sqrt{\V M^H_1}} \, e^{-H\tau} M^H_{e^\tau},\qquad \tau\in\R.
$$
It is called the Lamperti transform of $M^H$. Note that one has to exclude the trivial case $H=\frac{1}{2}$ here, as then $\V M^{\frac{1}{2}}_1 = 0$. We note that since $M^H$ is a centred, continuous, $H$-self-similar Gaussian process, its Lamperti transform $\Lamp M^H$ is a centred, continuous GSP of unit variance.

The first goal is to prove that
\begin{equation} \label{eqn:pessandstat}
\theta(M^H)=\lim_{T\to\infty}- \frac{1}{T} \log \Pro\left[\sup_{\tau\in[0,T]} (\Lamp M^H)_\tau <0\right],
\end{equation}
where the right hand side is also called the persistence exponent of the GSP $\Lamp M^H$. This will be achieved in Lemma~\ref{lem:pecoincide} below using Theorem 1 in \cite{molchan2006problem}. We can then work in the setup of GSPs and focus on the correlation function of $\Lamp M^H$.

In order to prove the subsequent main results we will rely on a continuity lemma for the persistence exponent of GSPs developed in \cite{dembomukherjee1,dembomukherjee2,aurzada2023persistence} and summarised in Lemma 1 in \cite{aurzada2022asymptotics}. This continuity lemma relates the \emph{convergence of the correlation functions} of a sequence of centred, continuous GSPs to the \emph{convergence of their persistence exponents}, subject to checking some technical conditions. The continuity of the function $H\mapsto\theta(M^H)$ on $(0,\frac{1}{2},\frac{1}{2},1)$ follows directly from the continuity lemma after checking its conditions. The asymptotic behaviour for $H\downarrow0$, $H\uparrow1$, and $H\to 1/2$, respectively, is obtained by rescaling the correlation function of $\Lamp M^H$ appropriately.

Let us outline the structure of this paper. In Section~\ref{chapter:preliminaries}, we are going to set up some preliminary material and prove the existence of the limit in (\ref{eq:thetaM}) and the relation (\ref{eqn:pessandstat}). The proof for the continuity in $(0,\frac{1}{2})\cup(\frac{1}{2},1)$ is given in Section~\ref{chapter:H0}. Afterwards, the proofs for the asymptotic behaviour for $H\downarrow0$ and $H\uparrow1$ are given in Section~\ref{chapter:H1}. Finally, the situation for $H\to \frac{1}{2}$ is the subject of Section~\ref{chapter:H1/2}.

\section{Preliminaries}\label{chapter:preliminaries}

\subsection{The continuity lemma}
At the heart of our analysis lies Lemma 1(a) from \cite{aurzada2022asymptotics} (developed in \cite{dembomukherjee1,dembomukherjee2,aurzada2023persistence}), which allows for a connection between \emph{convergence of correlation functions} of GSPs and \emph{convergence of persistence exponents}. For the reader's convenience the mentioned lemma is restated here. % with its three technical conditions that need to be verified in the later parts of this work.
\begin{Lemma}\label{KilianLemma}
For $k\in\N$, let $(Z_\tau^{k})_{\tau\geq0}$ be a centred GSP with correlation function $A_k:\R^+_0\rightarrow[0,1]$ and $A_k(0)=1$. Suppose that the sequence of functions $(A_k)_{k\in\N}$ converges pointwise for $k\rightarrow\infty$ to a correlation function $A:\R^+_0\rightarrow[0,1]$ corresponding to a GSP $(Z_\tau)_{\tau\geq0}$.
If $Z^{k}$ and $Z$ have continuous sample paths and the conditions 
\begin{align}
    &\lim_{L\rightarrow\infty}\limsup_{k\rightarrow\infty}\sum_{\tau=L}^\infty A_k\left(\frac{\tau}{\ell}\right)=0, \quad\text{for every } \ell\in\N,\label{sum}\\
    &\limsup_{\epsilon\downarrow0}\lvert\log(\epsilon)\rvert^\eta \sup_{k\in\N,\tau\in[0,\epsilon]}(1-A_k(\tau))<\infty\quad\text{for some }\eta>1,\label{log}\\
    &\limsup_{\tau\rightarrow\infty}\frac{\log A(\tau)}{\log \tau}<-1\label{frac}
\end{align}
are fulfilled, then 
\begin{equation*}
    \lim_{k,T\rightarrow\infty}\frac{1}{T}\log \Pro\left[Z_\tau^{k}<0, \forall \tau\in[0,T]\right]= \lim_{T\rightarrow\infty}\frac{1}{T}\log \Pro\left[Z_\tau<0, \forall \tau\in[0,T]\right].
\end{equation*}
\end{Lemma}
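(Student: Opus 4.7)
My plan is to reduce the continuous-time persistence event to one on a grid, pass to the limit on the grid via finite-dimensional convergence, and then undo the discretization. Fix $\ell\in\N$ and compare $\Pro[Z^k_\tau<0\ \forall\tau\in[0,T]]$ with the grid event over $\{i/\ell:0\le i\le\lfloor T\ell\rfloor\}$. Condition (\ref{log}) gives a bound on $1-A_k(\tau)$ of order $|\log\tau|^{-\eta}$ with $\eta>1$, uniformly in $k$, which is exactly what a Dudley / Garsia--Rodemich--Rumsey estimate needs to produce a uniform-in-$k$ modulus of continuity for $Z^k$. Inserting a small buffer at the grid points (replacing the level $0$ by $-\varepsilon$ inside the event and controlling excursions between consecutive grid points by a union bound based on the uniform modulus) shows that the continuous and the grid persistence probabilities differ by a multiplicative factor that is $e^{o(T)}$, uniformly in $k$, when $\ell$ is large.

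On the grid, the exponent $\theta_\ell(Z^k)$ exists by subadditivity, provided widely separated blocks of the discretised process are nearly independent. This decoupling is exactly what (\ref{sum}) supplies: the sum of correlations at lags $\ge L$ is uniformly small in $k$ for $L$ large. To show $\theta_\ell(Z^k)\to\theta_\ell(Z)$ as $k\to\infty$, I would split $[0,T]$ into consecutive blocks of length $L/\ell$, use a Berman-type normal comparison inequality to replace the joint persistence probability by the product of single-block persistence probabilities up to an error controlled by the tail sum in (\ref{sum}), and then use pointwise convergence of $A_k$ on the finitely many lags appearing inside a single block to pass to the limit in the single-block probability (which is continuous in the covariance matrix of a fixed-dimensional Gaussian vector). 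Letting $T\to\infty$ and $k\to\infty$, then $L\to\infty$, and finally $\ell\to\infty$ gives the claimed convergence; condition (\ref{frac}) enters at this final stage to ensure $\theta_\ell(Z)\to\theta(Z)$, so that the grid-to-continuum gap closes for the limit process.

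The main obstacle is the joint interchange of the four limits $k\to\infty$, $T\to\infty$, $L\to\infty$ and $\ell\to\infty$. The discretization error in the first step must be $o(T)$ uniformly in $k$, which is why (\ref{log}) is stated with exponent $\eta>1$ rather than $\eta=1$ (just enough to make the relevant metric-entropy integral converge); the block decoupling in the second step must likewise be $o(T)$ uniformly in $k$, which is what the \emph{uniform} tail condition (\ref{sum}) delivers. Once both uniformities are in place, a diagonal extraction identifies the joint limit with $\theta(Z)$ as stated. I expect the uniform-in-$k$ regularity in Step~1 to be the most delicate part, since it is what simultaneously justifies the union bound over excursions and keeps the penalty for the buffer $\varepsilon$ subexponential in $T$.
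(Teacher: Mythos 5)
First, a point of order: the paper does not prove this lemma at all. It is quoted verbatim from Lemma~1(a) of \cite{aurzada2022asymptotics}, which in turn packages results of \cite{dembomukherjee1,dembomukherjee2,aurzada2023persistence}; the only comparison available is with the proofs in those references. Your outline does reproduce their general architecture --- discretise, control the lattice exponent, transfer back to the continuum --- and your reading of why $\eta>1$ in \eqref{log} is exactly the entropy-integral threshold is correct. But two of your steps would fail as described. The more serious one is the blockwise factorisation: Berman's normal comparison inequality controls the \emph{difference} of two Gaussian orthant probabilities by an additive error of order $\sum_{i<j}\lvert r_{ij}\rvert$, which over a window of length $T$ is at best of order $T\cdot\varepsilon_L$ with $\varepsilon_L$ the tail sum from \eqref{sum}. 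The persistence probabilities you are comparing are themselves of order $e^{-\theta T}$, i.e.\ exponentially smaller than this additive error, so the comparison says nothing about the exponent. This is precisely the obstruction that forces the cited proofs to work quite differently: Slepian's inequality and the hypothesis $A_k\geq 0$ (which your argument never invokes, but which is essential) give supermultiplicativity of the grid persistence probabilities for free, hence existence of the grid exponent by Fekete and one of the two required inequalities between exponents; the reverse inequality is the hard part and requires the dedicated machinery of \cite{dembomukherjee2,aurzada2023persistence}, not a direct comparison bound. Relatedly, existence of the grid exponent does not come from ``near independence of distant blocks'' but from nonnegative correlation.

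The second gap is in the grid-to-continuum step. The uniform modulus of continuity from \eqref{log} lets you union-bound excursions between grid points, but the resulting inequality compares the continuum event at level $0$ with the grid event at the buffered level $-\varepsilon$. To close the argument you must additionally show that the grid persistence exponent is continuous in the barrier level, uniformly in $k$, i.e.\ that replacing $0$ by $-\varepsilon$ changes $\frac{1}{T}\log\Pro[\cdots]$ only by $o(1)$ as $\varepsilon\downarrow 0$; this does not follow from path regularity and is again handled in the cited works by separate lemmas exploiting $A_k\geq0$. Finally, the conclusion is a genuine double limit in $(k,T)$, so you need a sandwich valid for all $k\geq k_0(\delta)$ and $T\geq T_0(\delta)$ simultaneously; iterated limits followed by ``diagonal extraction'' only identify the limit along a subsequence $(k_j,T_j)$ and do not establish the unrestricted joint limit. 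You correctly identify uniformity in $k$ as the crux, but the plan as written does not deliver it.
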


\subsection{The correlation function of $\Lamp M^H$}
The goal of this subsection is to give some convenient representations of the correlation function of the GSP $\Lamp M^H$. For $t\in\R$ and $H\in(0,\tfrac{1}{2})\cup(\tfrac{1}{2},1)$ define the functions
\begin{equation*}
    k_t^H(s):= (t+s)^{H-\tfrac{1}{2}}-s^{H-\tfrac{1}{2}}, \qquad s\in\R,
\end{equation*}
and note that a distributionally equivalent version of $\process{M_t^H}$ is given by
$$
    M_t^H=\int_0^\infty k_t^H(s)\d B_s,\qquad t\geq 0.
$$
We note for future reference that for any $t\ge 0$ and  $H\in(0,\tfrac{1}{2})$ the function $k^H_t$ is non-positive, while it is non-negative for $H\in(\tfrac{1}{2},1)$.

Further, we not only look at the Lamperti transform of $M^H$, but also consider the Lamperti transforms of $B^H$ and $R^H$:
\begin{align*}
    (\Lamp B^H)_\tau&:= e^{-H\tau}B^H_{e^\tau},\qquad \tau\in\R,\\
    (\Lamp R^H)_\tau&:= \sqrt{2H}e^{-H\tau}R^H_{e^\tau},\qquad \tau\in\R,\\
    (\Lamp M^H)_\tau&= \left(\sigma_H^2-\tfrac{1}{2H}\right)^{-\tfrac{1}{2}}e^{-H\tau}M^H_{e^\tau},\qquad \tau\in\R,
\end{align*}
where the normalisation is such that $\V[ (\Lamp B^H)_\tau]=\V[ (\Lamp R^H)_\tau]=\V[ (\Lamp M^H)_\tau]=1$ for all $\tau\in\R$ (and we used \eqref{eq:VanNess2} and the independence of $R^H$ and $M^H$ to obtain the correct normalisation for $M^H$ by calculating that $0<\V[M^H_1]=\sigma^2_H-\frac{1}{2H}$).
The corresponding correlation functions are given by 
\begin{align*}
    c_H(\tau)&\coloneqq \E[(\Lamp B^H)_\tau(\Lamp B^H)_0] = \cosh(H\tau)-\tfrac{1}{2}\left(2\sinh(\tfrac{\tau}{2})\right)^{2H},\\
    r_H(\tau)&\coloneqq \E[(\Lamp R^H)_\tau (\Lamp R^H)_0]= \frac{4H}{1+2H}\, e^{-\tfrac{\tau}{2}}\,{}_{2}F_1\left(1,\tfrac{1}{2}-H,\tfrac{3}{2}+H,e^{-\tau}\right),
\end{align*}
with the standard notation for the Gaussian hypergeometric function ${}_2F_1$
(and we used the integral representation of ${}_2F_1$ and the fact that ${}_2F_1(a,b,c,z)={}_2F_1(b,a,c,z)$).
The correlation function of $\Lamp M^H$ can be derived using equation \eqref{eq:VanNess2}, the independence of $R^H$ and $M^H$, and the fact that $B^H$, $R^H$ and $M^H$ are centred processes. This gives the following representations.

\begin{Lemma} \label{lem:representationsofg_H}
We have
\begin{equation}\label{ugg}
    g_H(\tau)\coloneqq\E[(\Lamp M^H)_\tau (\Lamp M^H)_0]= (\sigma_H^2-\tfrac{1}{2H})^{-1}(\sigma_H^2c_H(\tau)-\tfrac{1}{2H}r_H(\tau)).
\end{equation}
Alternatively, we have
\begin{equation}\label{integralrepresentation}
    g_H(\tau)=(\sigma_H^2-\tfrac{1}{2H})^{-1}\int_0^\infty K_0^H(s)K_\tau^H(s)\d s
\end{equation}
as well as the relation
\begin{equation}\label{const=int}
    \sigma_H^2-\tfrac{1}{2H}=\int_0^\infty K_0^H(s)^2 \d s,
\end{equation}
where
\begin{equation*}
    K_\tau^H(s):=e^{-H\tau}\left( (e^\tau+s)^{H-\tfrac{1}{2}}-s^{H-\tfrac{1}{2}}\right),\qquad s\geq 0.
\end{equation*}
\end{Lemma}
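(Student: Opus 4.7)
My plan is to derive all three identities from two ingredients: the Mandelbrot--van Ness decomposition together with the independence of $R^H$ and $M^H$, and the It\^o isometry applied to the integral representation $M_t^H=\int_0^\infty k_t^H(s)\,\d B_s$ noted just before the lemma.

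For \eqref{ugg}, I would rearrange \eqref{eq:VanNess2} as $M_t^H=\sigma_H B_t^H-R_t^H$. Since $R^H$ and $M^H$ are independent and centred, $\E[R_t^H M_s^H]=0$, and hence, for all $t,s\ge 0$,
\[
\sigma_H^2\,\E[B_t^H B_s^H]=\E[R_t^H R_s^H]+\E[M_t^H M_s^H].
\]
Specialising to $t=e^\tau$, $s=1$ and multiplying by $e^{-H\tau}$, the three terms become, by the definitions of the respective Lamperti transforms and the normalisations $\V[B_1^H]=1$, $\V[R_1^H]=\tfrac{1}{2H}$ and $\V[M_1^H]=\sigma_H^2-\tfrac{1}{2H}$, exactly $\sigma_H^2 c_H(\tau)$, $\tfrac{1}{2H}r_H(\tau)$ and $(\sigma_H^2-\tfrac{1}{2H})g_H(\tau)$, respectively. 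Solving for $g_H(\tau)$ yields \eqref{ugg}.

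For \eqref{const=int} and \eqref{integralrepresentation}, the It\^o isometry gives
\[
\E[M_t^H M_u^H]=\int_0^\infty k_t^H(s)k_u^H(s)\,\d s,\qquad t,u\ge 0.
\]
Taking $t=u=1$ and observing that $K_0^H(s)=k_1^H(s)$ identifies $\V[M_1^H]$ with $\int_0^\infty K_0^H(s)^2\,\d s$; combined with the already established identity $\V[M_1^H]=\sigma_H^2-\tfrac{1}{2H}$, this is \eqref{const=int}. For \eqref{integralrepresentation}, take $t=e^\tau$, $u=1$, multiply by $e^{-H\tau}(\sigma_H^2-\tfrac{1}{2H})^{-1}$, and use $e^{-H\tau}k_{e^\tau}^H(s)=K_\tau^H(s)$ together with $k_1^H(s)=K_0^H(s)$ to pull the factor $e^{-H\tau}$ inside the integral.

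There is no real obstacle here; the lemma is essentially a bookkeeping exercise. The only point requiring a little care is tracking the three distinct normalisation constants when passing to the Lamperti transforms so that the factors in front of $c_H$, $r_H$ and $g_H$ come out correctly, and verifying that the integral representation stated before the lemma is compatible with the original Mandelbrot--van Ness form of $M^H$ (which is the substitution $s\mapsto -s$ in the integral over $(-\infty,0)$). Once these are set straight, both the covariance identity and the It\^o isometry yield the stated representations directly.
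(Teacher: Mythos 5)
Your proposal is correct and follows exactly the route the paper indicates: the paper gives no formal proof of this lemma, only the remark that the representations follow from the Mandelbrot--van Ness equation \eqref{eq:VanNess2}, the independence of $R^H$ and $M^H$, and centredness, which is precisely your derivation of \eqref{ugg}, while \eqref{integralrepresentation} and \eqref{const=int} follow, as you say, from the It\^o isometry applied to the representation $M_t^H=\int_0^\infty k_t^H(s)\,\d B_s$ stated just before the lemma. All normalisation constants are tracked correctly.
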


Again we note for future reference that, similarly to the function $k_t^H$, for any $\tau\geq 0$ and  $H\in(0,\tfrac{1}{2})$ the function $K^H_\tau$ is non-positive, while it is non-negative for $H\in(\tfrac{1}{2},1)$.
Then, by positivity of the integrands in both cases \eqref{integralrepresentation} and \eqref{const=int} we also obtain positivity of $g_H$. 

\subsection{Connecting the persistence exponents}
The purpose of this subsection is to show that the respective persistence exponents of the process $M^H$ and its Lamperti transform $\Lamp M^H$ exist and are identical for any $H\in(0,\tfrac{1}{2})\cup(\tfrac{1}{2},1)$, i.e.\ (\ref{eqn:pessandstat}) holds, so that we can focus our attention on the exponents of the latter process.\\
We need the following corollary which is a consequence of Theorem~1 in \cite{molchan2006problem}. 
\begin{Corollary}\label{Cor:Mol}
Let $\process{X_t}$ be a centred, continuous, $H$-self-similar Gaussian process with positive covariance function satisfying, for some $c>0$
\begin{align} \label{eqn:masterthesis1.3}
    \E\left[ |X_t-X_{t'}|^2\right] \leq c |t-t'|^{2H},\qquad t,t'\in[0,1].
\end{align}
Let $(\H_X,\lVert\cdot\rVert_X)$ be the associated Reproducing kernel Hilbert space (RKHS). If there exists $\phi\in\H_X$ such that for all $t\geq1$ also $\phi(t)\geq1$ holds, then the persistence exponents of $X$ and the Lamperti transform of $X$ both exist and coincide, i.e.\ 
$$
    \theta(X):=\lim_{T\rightarrow\infty}\frac{1}{\log T} \log\Pro\left[  \sup_{t\in[0,T]} X_t<1 \right]
    = \lim_{T\rightarrow\infty}\frac{1}{T}\log\Pro\left[  \sup_{\tau\in[0,T]} e^{- \tau H} X_{e^\tau}<0 \right].
$$
\end{Corollary}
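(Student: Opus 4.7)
The plan is to invoke Theorem 1 of \cite{molchan2006problem} essentially as a black box and to verify that the hypotheses of the corollary reproduce exactly its assumptions. Molchan's theorem establishes, for a centered self-similar Gaussian process satisfying certain regularity and an RKHS domination condition, that the persistence probability $\Pro[\sup_{t\in[0,T]} X_t<1]$ decays polynomially in $T$ and, moreover, that the decay rate coincides with the exponential decay rate of the persistence probability of the associated stationary Lamperti process. Thus the entire assertion of the corollary — existence of both limits and their equality — is contained in Molchan's conclusion, provided his hypotheses can be checked.

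So the work reduces to matching our assumptions to his. First, $X$ being centered, continuous, Gaussian, and $H$-self-similar is given outright. Second, the moment inequality $\E[|X_t-X_{t'}|^2]\leq c|t-t'|^{2H}$ on $[0,1]$ supplies the Kolmogorov-type continuity bound required by \cite{molchan2006problem}; by self-similarity it automatically extends to any bounded interval. Third, positivity of the covariance function is used in Molchan's arguments to compare one-sided barrier events via FKG/Slepian-type inequalities. Fourth and most importantly, the hypothesis that there exists $\phi\in\H_X$ with $\phi(t)\geq 1$ for all $t\geq 1$ is precisely Molchan's RKHS admissibility condition: it allows the barrier level $1$ to be replaced, via a Cameron--Martin shift of mean $-\phi$, by the barrier level $0$ for the Lamperti transform on the logarithmic time scale, at the cost of a Radon--Nikodym factor that is subexponential in $\log T$ and hence does not affect the exponent.

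The main obstacle — and the reason one needs \cite{molchan2006problem} rather than an elementary substitution — lies exactly in this last step: the naive time change $t=e^\tau$ converts the event $\{\sup_{t\in[0,T]} X_t<1\}$ into $\{\sup_{\tau\in(-\infty,\log T]}(\Lamp X)_\tau<e^{-H\tau}\}$, where the barrier $e^{-H\tau}$ blows up as $\tau\to-\infty$ and must therefore be traded for the flat barrier $0$ appearing in the stationary persistence exponent. The RKHS element $\phi$ is what makes this trade possible without changing the polynomial order, and it is Molchan's Theorem 1 that makes this rigorous. Once the hypotheses are verified as above, the existence of both $\lim_{T\to\infty}(\log T)^{-1}\log\Pro[\sup_{t\in[0,T]} X_t<1]$ and $\lim_{T\to\infty}T^{-1}\log\Pro[\sup_{\tau\in[0,T]} e^{-H\tau}X_{e^\tau}<0]$ and their equality follow immediately.
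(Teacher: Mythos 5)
Your proposal is correct and follows essentially the same route as the paper: both reduce the statement to Theorem~1 of \cite{molchan2006problem} and verify its hypotheses from the stated assumptions (self-similarity, the H\"older-type moment bound, positivity of the covariance, and the RKHS element $\phi$ with constant norm so that it is $o(\log T)$). The paper merely carries out the verification more explicitly --- in particular checking Molchan's condition (c) via the entropy integral $\int_0^1 \delta(h)\,\d\sqrt{\log(1/h)}<\infty$ using $\delta(h)\leq c h^{H}$ --- and then passes from the zero-barrier event on $[1,T]$ to the Lamperti form by the exact substitution $t=e^\tau$.
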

\begin{proof}
We use the following special case of Theorem~1 in \cite{molchan2006problem}: $U_0=[0,1]$, $S_0=\{0\}$, $\Delta=[0,1]$, and Molchan's $\phi_T$ is our $T$-independent function $\phi$. Further $\psi(T)=\log T$, while $\sigma_T$ is a sufficiently large constant.

Let us verify the conditions (a), (b), (c) in \cite{molchan2006problem}: (a) is precisely our assumption that $\phi(t)\geq 1$, for all $t\geq 1$, and the fact that the RKHS norm of $\phi$ is constant in $T$ and thus in $o(\psi(T))$. Condition (b) is straightforward to check. Only condition (c) is non-trivial. Here, the first step is to note that $\sup\{ (\E \left[X_s^2\right])^{1/2} : s \in[0,1]\}$ is a constant. Further, the function $\delta_T(h)=\delta(h)=\sup\{ (\E |X_t-X_{t'}|^2)^{1/2} : t,t' \in[0,1], |t-t'|\leq h\}$ satisfies $\delta(h)\leq c h^H$, by assumption (\ref{eqn:masterthesis1.3}). This shows that $|\int_0^1 \delta(h) \d \sqrt{\log 1/h}|<\infty$, yielding (c) for a sufficiently large constant $\sigma_T$. 
The theorem then implies (using continuity of paths in the second step): % equality which can be directly simplified by inserting definitions to 
\begin{align*}
    \lim_{T\rightarrow\infty}\frac{1}{\log T} \,\log \Pro\left[  \sup_{t\in[0,T]} X_t<1 \right]     &=\lim_{T\rightarrow\infty}\frac{1}{\log T}\,\log \Pro\left[\forall t\in (1,T] :\,X_t\neq0 ,\, X\rvert_{\lbrace1,T\rbrace}<0\right]\\
    &=\lim_{T\rightarrow\infty}\frac{1}{\log T}\log \Pro\left[  \sup_{t\in[1,T]} X_t<0 \right]\\
    &= \lim_{T\rightarrow\infty}\frac{1}{T}\,\log\Pro\left[  \sup_{\tau\in[0,T]} e^{-H\tau} X_{e^\tau}<0 \right].\qedhere
\end{align*}
\end{proof}

In order to apply the last lemma to $M^H$, we have to check that (\ref{eqn:masterthesis1.3}) is satisfied.

\begin{Lemma} \label{lem:addlemma1.3}
The process $M^H$ satisfies (\ref{eqn:masterthesis1.3}).
\end{Lemma}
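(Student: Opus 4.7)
The plan is to exploit the Mandelbrot–van Ness decomposition \eqref{eq:VanNess2} together with the stated independence of $R^H$ and $M^H$, which reduces the problem to the well-known increment variance of fractional Brownian motion.

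Writing \eqref{eq:VanNess2} in the form $\sigma_H B_t^H = R_t^H + M_t^H$, subtracting the values at $t'$ and taking variances gives
\begin{equation*}
    \sigma_H^2\,\E\!\left[|B_t^H-B_{t'}^H|^2\right] = \E\!\left[|R_t^H-R_{t'}^H|^2\right] + \E\!\left[|M_t^H-M_{t'}^H|^2\right],
\end{equation*}
since $R^H$ and $M^H$ are independent and both are centred. The left-hand side equals $\sigma_H^2 |t-t'|^{2H}$ because FBM has stationary increments. Dropping the non-negative term $\E[|R_t^H-R_{t'}^H|^2]$ on the right yields
\begin{equation*}
    \E\!\left[|M_t^H-M_{t'}^H|^2\right] \leq \sigma_H^2\,|t-t'|^{2H},
\end{equation*}
which is the required bound with $c = \sigma_H^2$ (in fact the estimate holds for all $t,t'\geq 0$, not merely on $[0,1]$).

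There is no real obstacle here; the only thing to be careful about is to make sure the independence and the identification of $\E[|B_t^H-B_{t'}^H|^2]=|t-t'|^{2H}$ are invoked cleanly, both of which are already recorded in the excerpt.
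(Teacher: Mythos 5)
Your proof is correct, but it takes a genuinely different route from the paper's. The paper computes $\E\bigl[|M^H_t-M^H_{t'}|^2\bigr]$ directly from the one-sided kernel representation $M^H_t=\int_0^\infty k^H_t(s)\,\d B_s$: after the shift $s\mapsto s-t'$ and a scaling substitution it arrives at
\begin{equation*}
\E\bigl[|M^H_t-M^H_{t'}|^2\bigr]\leq (t-t')^{2H}\int_0^\infty\bigl((1+s)^{H-\tfrac{1}{2}}-s^{H-\tfrac{1}{2}}\bigr)^2\,\d s,
\end{equation*}
so the constant it obtains is $c_H=\sigma_H^2-\tfrac{1}{2H}=\V[M^H_1]$, i.e.\ exactly the quantity appearing in \eqref{const=int}. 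You instead subtract the increments in the Mandelbrot--van Ness identity $\sigma_H B^H_t=R^H_t+M^H_t$, use that $R^H$ and $M^H$ are driven by the Brownian motion on disjoint half-lines (hence their increments are independent and the variances add), identify $\E\bigl[|B^H_t-B^H_{t'}|^2\bigr]=|t-t'|^{2H}$ from the FBM covariance, and drop the non-negative $R^H$-term. This is a clean and valid argument; it yields the slightly larger constant $c=\sigma_H^2$ (which is immaterial for the application in Corollary~\ref{Cor:Mol}) and, as you note, holds for all $t,t'\geq0$. What the paper's computation buys in exchange for being a few lines longer is self-containedness --- it does not need the value of the normalisation $\sigma_H$ or the FBM covariance formula --- and it produces the sharp constant, which is the same integral that reappears in Lemma~\ref{lem:representationsofg_H}. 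Either proof is acceptable.
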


\begin{proof} Let $t'<t$ and observe that
\begin{align*}
\E\left[|M^H_t-M_{t'}^H|^2\right] & = \int_0^\infty \left( (t+s)^{H-\tfrac{1}{2}} - (t'+s)^{H-\tfrac{1}{2}}\right)^2 \d s
\notag \\
& = \int_{t'}^\infty \left( (t-t'+s)^{H-\tfrac{1}{2}} - s^{H-\tfrac{1}{2}}\right)^2 \d s
\notag \\
& \leq (t-t')^{2H-1} \int_0^\infty \left( \left(1+\frac{s}{t-t'}\right)^{H-\tfrac{1}{2}} - \left(\frac{s}{t-t'}\right)^{H-\tfrac{1}{2}}\right)^2 \d s
\notag \\
& = (t-t')^{2H} \int_0^\infty \left( \left(1+s\right)^{H-\tfrac{1}{2}} - s^{H-\tfrac{1}{2}}\right)^2 \d s
\notag \\
& = c_H (t-t')^{2H}. \label{eqn:1.3computation} \qedhere
\end{align*}
\end{proof}

We can now apply Corollary~\ref{Cor:Mol} to our process $M^H$.

\begin{Lemma} \label{lem:pecoincide} Fix $H\in(0,\tfrac{1}{2})\cup(\tfrac{1}{2},1)$.
The persistence exponent of $M^H$ exists and satisfies
$$
\theta(M^H):=\lim_{T\to\infty}- \frac{1}{\log T} \log \Pro\left[\sup_{t\in[0,T]} M^H_t <1\right]=\lim_{T\to\infty}- \frac{1}{T} \log \Pro\left[\sup_{\tau\in[0,T]} (\Lamp M^H)_\tau <0\right],
$$
i.e.\ (\ref{eqn:pessandstat}) holds.
\end{Lemma}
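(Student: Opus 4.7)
The strategy is to invoke Corollary~\ref{Cor:Mol} with $X = M^H$, so the task reduces to verifying its four hypotheses: (i) centred, continuous, $H$-self-similar; (ii) non-negative covariance; (iii) the Hölder-type bound \eqref{eqn:masterthesis1.3}; (iv) existence of $\phi$ in the RKHS $\H_{M^H}$ with $\phi(t)\geq 1$ for all $t\geq 1$. Items (i) and (iii) are already in place: centredness and self-similarity are standard, continuity (in fact smoothness) of $M^H$ has been noted in the introduction, and \eqref{eqn:masterthesis1.3} is precisely Lemma~\ref{lem:addlemma1.3}. For (ii), observe that the integral representation yields
$$
\E[M^H_t M^H_{t'}]=\int_0^\infty k_t^H(s)\,k_{t'}^H(s)\,\d s,
$$
and since $k_t^H$ is of constant sign (non-positive for $H<\tfrac{1}{2}$, non-negative for $H>\tfrac{1}{2}$, as recorded just after the definition of $k_t^H$), the integrand is non-negative, hence the covariance is non-negative.

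The only non-routine point is (iv). I propose to take $\phi$ to be the RKHS element dual to the point evaluation at $1$, namely
$$
\phi(t):=\frac{\E[M^H_t M^H_1]}{\V[M^H_1]}=\frac{1}{\sigma_H^2-\tfrac{1}{2H}}\int_0^\infty k_t^H(s)\,k_1^H(s)\,\d s.
$$
This $\phi$ lies in $\H_{M^H}$ by the standard reproducing-kernel identity (its RKHS norm is $\V[M^H_1]^{-1/2}$, finite and independent of $T$). The key claim is then that
$$
k_t^H(s)\,k_1^H(s)\ \geq\ k_1^H(s)^2\qquad\text{for all }t\geq 1,\ s\geq 0,
$$
from which integrating gives $\phi(t)\geq 1$ for $t\geq 1$. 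To verify this, note that for fixed $s\geq 0$ the map $t\mapsto (t+s)^{H-1/2}$ is monotone in $t$: increasing if $H>\tfrac{1}{2}$, decreasing if $H<\tfrac{1}{2}$. Hence for $t\geq 1$, $k_t^H(s)$ and $k_1^H(s)$ have the same sign with $|k_t^H(s)|\geq |k_1^H(s)|$, so their product dominates $k_1^H(s)^2$ pointwise.

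The main obstacle was to identify an explicit $\phi$; the monotonicity of the kernel $k_t^H$ in $t$ (together with the sign constancy already highlighted by the authors) makes the covariance element at the self-similar reference time $t=1$ the natural candidate. Once (i)--(iv) are in place, Corollary~\ref{Cor:Mol} delivers the existence of $\theta(M^H)$ and the identity \eqref{eqn:pessandstat} in a single stroke, completing the proof.
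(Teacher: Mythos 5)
Your proposal is correct and follows the paper's overall strategy exactly: apply Corollary~\ref{Cor:Mol} to $X=M^H$, with continuity, self-similarity, non-negativity of the covariance (via the constant sign of $k^H_t$), and condition \eqref{eqn:masterthesis1.3} (via Lemma~\ref{lem:addlemma1.3}) handled as in the paper. The genuine difference is in the construction of $\phi$. For $H\in(0,\tfrac12)$ your choice $\phi(t)=\E[M^H_tM^H_1]/\V[M^H_1]$, i.e.\ the auxiliary function $f_H=(\sigma_H^2-\tfrac{1}{2H})^{-1}k^H_1$, is literally the paper's. For $H\in(\tfrac12,1)$, however, the paper switches to the truncated auxiliary function $f_H(u)=C_H^{-1}(2H-1)u^{H-3/2}$ for $u>C_H/2$ (and $0$ otherwise), which requires a pointwise bound on $((t+s)^{H-1/2}-s^{H-1/2})/t$, a substitution, and a tail estimate to reach $\phi(t)\geq 2t^{2H-1}-t^{2H-2}\geq 1$. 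Your observation that the same covariance element still works there is valid: for $t\geq1$ and $H>\tfrac12$ one has $(t+s)^{H-1/2}\geq(1+s)^{H-1/2}$, hence $0\leq k^H_1(s)\leq k^H_t(s)$, so $k^H_t(s)k^H_1(s)\geq k^H_1(s)^2$ pointwise, and integrating gives $\phi(t)\geq g_H(0)=1$; the sign and monotonicity simply reverse together in the other regime. This unifies the two cases and is shorter than the paper's treatment of $H>\tfrac12$; the paper's more elaborate choice yields the stronger lower bound $2t^{2H-1}-t^{2H-2}$, but that extra strength is not used elsewhere, so nothing is lost by your route.
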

\begin{proof}
In this proof the conditions of Corollary \ref{Cor:Mol} will be verified in order to show the claim for $X=M^H$.
Clearly, the process is continuous and $H$-self-similar and satisfies (\ref{eqn:masterthesis1.3}), by Lemma~\ref{lem:addlemma1.3}. 
It is only left to show that for any $H\in(0,\tfrac{1}{2})\cup (\tfrac{1}{2},1)$ there exists a function $\phi\in\H_{M^H}$ with $\phi(t)\geq1$ for any $t\geq1$.
A function $\phi$ in the RKHS can be parametrized by an auxiliary function $f_H\in L^2(\R^+,\d u)$ such that
\begin{equation*}
    \phi(t)=\int_0^\infty k_t^H(u)f_H(u)\d u.
\end{equation*}
    In the case $H\in(0,\tfrac{1}{2})$, a suitable auxiliary function $f_H$ is given by
\begin{equation*}
    f_H(u):=(\sigma_H^2-\tfrac{1}{2H})^{-1}k_1^H(u),
\end{equation*}
which is square integrable since the process $M^H$ is of finite variance.
For $t\geq1$ we can conclude
\begin{align}
    \phi(t)&=(\sigma_H-\tfrac{1}{2H})^{-1} \int_0^\infty \left(s^{H-\tfrac{1}{2}}- (t+s)^{H-\tfrac{1}{2}}\right)\left( s^{H-\tfrac{1}{2}}- (1+s)^{H-\tfrac{1}{2}} \right)\d s \notag \\
    &\geq(\sigma_H-\tfrac{1}{2H})^{-1}\int_0^\infty\left( s^{H-\tfrac{1}{2}}- (1+s)^{H-\tfrac{1}{2}} \right)^2\d s \notag \\
    &=g_H(0)
    =1. \label{eqn:copyexistenceofphi}
\end{align}
Turning to $H\in(\tfrac{1}{2},1)$, we need to change the auxiliary function $f_H$ to
\begin{equation*}
    f_H(u):=\begin{cases}(\sigma_H^2-\tfrac{1}{2H})^{-1}(2H-1)u^{H-\tfrac{3}{2}},\quad&\text{for }u>\frac{\sigma_H^2-\tfrac{1}{2H}}{2} \\0,\quad\quad&\text{otherwise}.
    \end{cases}
\end{equation*}
This is again a valid auxiliary function since again $f_H\in L^2(\R^+,\d u)$ holds.
Then we can estimate for all $s>0$ and $t\geq1$ % by concavity and the Mean value theorem that
\begin{equation*}
    0\leq  \frac{(t+s)^{H-\tfrac{1}{2}}-s^{H-\tfrac{1}{2}}}{t} = \frac{H-\frac{1}{2}}{t} \int_0^t (u+s)^{H-\frac{3}{2}}\d u  \leq  (H-\tfrac{1}{2})s^{H-\tfrac{3}{2}}.
\end{equation*}
This implies for $C_H\coloneq\sigma_H^2-\tfrac{1}{2H}$ the chain of inequalities
\begin{align*}
    \phi(t)&=C_H^{-1} \int_{\tfrac{C_H}{2}}^\infty \left( (t+s)^{H-\tfrac{1}{2}}-s^{H-\tfrac{1}{2}} \right)(2H-1)s^{H-\tfrac{3}{2}}\d s\\
    &\geq2 C_H^{-1} t^{-1} \int_{\tfrac{C_H}{2}}^\infty\left( (t+s)^{H-\tfrac{1}{2}}-s^{H-\tfrac{1}{2}} \right)^2\d s\\
    &=2 C_H^{-1} t^{2H-2} \int_{\tfrac{C_H}{2}}^\infty\left( \left(1+\frac{s}{t}\right)^{H-\tfrac{1}{2}}-\left(\frac{s}{t}\right)^{H-\tfrac{1}{2}} \right)^2\d s\\
    &=2 C_H^{-1} t^{2H-1} \int_{\tfrac{C_H}{2t}}^\infty\left( \left(1+s\right)^{H-\tfrac{1}{2}}-s^{H-\tfrac{1}{2}} \right)^2\d s\\
    &\geq2 C_H^{-1} t^{2H-1} \left(\int_0^\infty\left( \left(1+s\right)^{H-\tfrac{1}{2}}-s^{H-\tfrac{1}{2}} \right)^2\d s-\frac{C_H}{2t}\right)\\
    &=2 t^{2H-1}- t^{2H-2}\\
    &\overset{t\geq1}{\geq} 1,
\end{align*}
where we used in the second to last estimate that $(1+s)^{H-\tfrac{1}{2}} -s^{H-\tfrac{1}{2}} \leq 1$. The proof is completed by applying Corollary \ref{Cor:Mol}.
\end{proof}

\section{Continuity of $H\mapsto\theta(M^H)$} \label{chapter:H0}
\subsection{Estimates for $H\neq\frac{1}{2}$}
In this section, we summarise some estimates on the correlation function $g_H$ that will be used in the following sections. For improved readability we introduce the function
\begin{equation}\label{eq:tildedef}
    \Tilde{\sigma}^2(H):= 2H\sigma_H^2 =\frac{\Gamma(H+\tfrac{1}{2})^2}{\sin(\pi H)\Gamma(2H)},\qquad H\in(0,\tfrac{1}{2})\cup(\tfrac{1}{2},1),
\end{equation}
which turns equation \eqref{ugg} into
\begin{equation}\label{eq:tilderep}
     g_H(\tau)= (\Tilde{\sigma}^2(H)-1)^{-1}(\Tilde{\sigma}^2(H)c_H(\tau)-r_H(\tau)).
\end{equation}
Note that for any $H\in(0,\tfrac{1}{2})\cup(\tfrac{1}{2},1)$ this can be simplified, first by applying Euler's reflection $\Gamma(z)\Gamma(1-z)=\frac{\pi}{\sin(\pi z)}$, $z\not\in\Z$, and then the Legendre duplication formula $\Gamma(z)\Gamma(z+\tfrac{1}{2})=2^{1-2z}\sqrt{\pi}\Gamma(2z)$, $z>0$, to see that
\begin{equation}\label{eq:logconv}
        \Tilde{\sigma}^2(H)=\pi^{-\tfrac{1}{2}}\,2^{1-2H}\Gamma(H+\tfrac{1}{2})\Gamma(1-H).
\end{equation}
The next lemma will be used to show continuity of $\theta(M^H)$ for all $H\in(0,\tfrac{1}{2})\cup(\tfrac{1}{2},1)$.
\begin{Lemma}\label{Lemma:sigmatilde}
The function $\Tilde{\sigma}^2$ as defined in \eqref{eq:tildedef} is strictly convex, attains 
its minimum in $H=\tfrac{1}{2}$ for the value $\Tilde{\sigma}^2(\tfrac{1}{2})=1$ and exhibits the asymptotic behaviour
\begin{equation*}
    \lim_{H\uparrow1}\Tilde{\sigma}^2(H)=\infty, \quad \lim_{H\downarrow0}\Tilde{\sigma}^2(H)=2.
\end{equation*}
More precisely, $\Tilde{\sigma}^2(H)\sim (4(1-H))^{-1}$ for $H\uparrow 1$. 
\end{Lemma}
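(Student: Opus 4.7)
The plan is to work with the representation \eqref{eq:logconv}, i.e.\
$$
\Tilde{\sigma}^2(H)=\pi^{-\tfrac{1}{2}}\,2^{1-2H}\,\Gamma(H+\tfrac{1}{2})\,\Gamma(1-H),
$$
which is smooth on $(0,1)$ and handles all four assertions uniformly.

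For the two boundary limits, I would plug in directly. At $H\downarrow 0$ the factor $2^{1-2H}\Gamma(H+\tfrac12)\Gamma(1-H)$ tends to $2\cdot\Gamma(\tfrac12)\cdot\Gamma(1)=2\sqrt\pi$, giving $\Tilde{\sigma}^2(H)\to 2$. For $H\uparrow 1$, the pole of the Gamma function at $0$ gives $\Gamma(1-H)\sim(1-H)^{-1}$, while the remaining factors tend to the finite constant $\pi^{-1/2}\cdot 2^{-1}\cdot\Gamma(\tfrac32)=\tfrac{1}{4}$; hence $\Tilde{\sigma}^2(H)\sim(4(1-H))^{-1}\to\infty$, which is both the stated asymptotic and the divergence claim.

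For strict convexity, the natural route is log-convexity. Taking logarithms,
$$
\log\Tilde{\sigma}^2(H)=-\tfrac{1}{2}\log\pi+(1-2H)\log 2+\log\Gamma(H+\tfrac{1}{2})+\log\Gamma(1-H),
$$
so differentiating twice yields
$$
\frac{d^2}{dH^2}\log\Tilde{\sigma}^2(H)=\psi'(H+\tfrac{1}{2})+\psi'(1-H),
$$
where $\psi'$ is the trigamma function, which is strictly positive on $(0,\infty)$. Thus $\log\Tilde{\sigma}^2$ is strictly convex on $(0,1)$, and since $(e^f)''=(f'^2+f'')e^f>0$ whenever $f''>0$, the function $\Tilde{\sigma}^2$ itself is strictly convex.

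Finally, to identify the minimum, I would compute
$$
\frac{d}{dH}\log\Tilde{\sigma}^2(H)=-2\log 2+\psi(H+\tfrac{1}{2})-\psi(1-H),
$$
and use the classical values $\psi(1)=-\gamma$ and $\psi(\tfrac{1}{2})=-\gamma-2\log 2$ to see that the derivative vanishes at $H=\tfrac{1}{2}$. By strict convexity the critical point is the unique global minimum, and $\Tilde{\sigma}^2(\tfrac12)=\pi^{-1/2}\cdot 1\cdot\Gamma(1)\cdot\Gamma(\tfrac12)=1$ gives the stated value. I do not expect any genuine obstacle; the only step requiring a short argument beyond direct substitution is the deduction that log-convexity of a positive function implies convexity of the function itself, which is handled by the two-line computation above.
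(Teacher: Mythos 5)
Your proposal is correct and follows essentially the same route as the paper: both work from the representation \eqref{eq:logconv}, read off the limits at $0$ and $1$ and the $\sim(4(1-H))^{-1}$ asymptotic from the pole of $\Gamma(1-H)$, derive strict convexity from strict log-convexity of the Gamma function, and locate the minimum at $H=\tfrac12$ via the digamma values $\psi(1)=-\gamma$ and $\psi(\tfrac12)=-\gamma-2\log 2$. Your explicit remarks that the second log-derivative is $\psi'(H+\tfrac12)+\psi'(1-H)>0$ and that log-convexity transfers to convexity via $(e^f)''=(f'^2+f'')e^f$ merely spell out steps the paper leaves implicit.
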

\begin{proof}
We get $\Tilde{\sigma}^2(\tfrac{1}{2})=1$ by a simple evaluation of the function using $\Gamma(\tfrac{1}{2})=\sqrt{\pi}$. From the representation of $\Tilde{\sigma}^2(H)$ in equation \eqref{eq:logconv}, we get $\lim_{H\downarrow0}\Tilde{\sigma}^2(H)=2$. Similarly, from \eqref{eq:logconv} we obtain that for $H\uparrow 1$
$$
\Tilde{\sigma}^2(H) \sim \pi^{-\frac{1}{2}} 2^{-1} \Gamma\left(\frac{3}{2}\right)\,\frac{\Gamma(2-H)}{1-H}\sim\pi^{-\frac{1}{2}} 2^{-2} \Gamma\left(\frac{1}{2}\right)\,\frac{\Gamma(1)}{1-H} = \frac{1}{4(1-H)}.
$$
Let us finally show strict convexity. In order to achive this we show that the derivative vanishes only at $H=\frac{1}{2}$. % of the function 
Taking the logarithm of the expression \eqref{eq:logconv}, we get 
\begin{equation*}
    \log \Tilde{\sigma}^2(H) =-\tfrac{1}{2}\log(\pi)+\log(2)(1-2H)+\log(\Gamma(H+\tfrac{1}{2}))+\log(\Gamma(1-H)),
\end{equation*}
which is strictly convex by the Gamma function being strictly logarithmic convex. 
Investigating the logarithmic derivative yields
\begin{equation*}
    \partial_H\log \Tilde{\sigma}^2(H)=\frac{\Gamma'(H+\tfrac{1}{2})}{\Gamma(H+\tfrac{1}{2})}-\frac{\Gamma'(1-H)}{\Gamma(1-H)}-2\log 2.
\end{equation*}
We evaluate this for $H=\frac{1}{2}$ using the table in chapter $44:7$ of the book \cite{oldham2009atlas} which lists the values of the so-called Digamma function $\Psi$ defined by $\Psi(z)\coloneqq \frac{\Gamma'(z)}{\Gamma(z)}$.
    With Euler's constant $\gamma$, one finds the following values: $\Psi(1)=-\gamma$ and $\Psi(\frac{1}{2})=-\gamma -2\log 2$. Thus the logarithmic derivative vanishes at $H=\frac{1}{2}$ and since $\Tilde{\sigma}^2(\tfrac{1}{2})>0$ holds, we can deduce 
\begin{equation*}
    0=\partial_H\log \Tilde{\sigma}^2(H)\big\rvert_{H=\tfrac{1}{2}}=\frac{\partial_H\Tilde{\sigma}^2(H)}{\Tilde{\sigma}^2(H )}\Bigg\rvert_{H=\tfrac{1}{2}},
\end{equation*}
implying
    $\partial_H\Tilde{\sigma}^2(H)\big\rvert_{H=\tfrac{1}{2}}=0$. 
\end{proof}
We also need an estimate for $c_H$, which is provided in the next lemma.
\begin{Lemma}\label{Lemma:c_H2}
For $H\in(0,\tfrac{1}{2})\cup (\tfrac{1}{2},1)$ and $\tau\geq0$ the following inequality holds:
\begin{equation*}
    c_H(\tau)\leq  \tfrac{1}{2}e^{-\tau H} +e^{-\tau (1-H)}.
\end{equation*}
\end{Lemma}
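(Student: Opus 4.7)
The plan is to reduce the inequality to a single-variable elementary inequality via a clean substitution. I would start from the explicit form of the correlation function recalled above,
$$
c_H(\tau) = \cosh(H\tau) - \tfrac{1}{2}\left(2\sinh(\tfrac{\tau}{2})\right)^{2H},
$$
and use $\cosh(H\tau) = \tfrac{1}{2}(e^{H\tau} + e^{-H\tau})$ together with the factorisation
$$
\bigl(2\sinh(\tfrac{\tau}{2})\bigr)^{2H} = (e^{\tau/2} - e^{-\tau/2})^{2H} = e^{H\tau}(1 - e^{-\tau})^{2H}.
$$
After cancelling the term $\tfrac{1}{2}e^{-H\tau}$ that appears on both sides and dividing through by $\tfrac{1}{2}e^{H\tau}$ (noting that $e^{-(1-H)\tau} = e^{H\tau}\cdot e^{-\tau}$), the claimed inequality becomes equivalent to
$$
1 - 2e^{-\tau} \leq (1 - e^{-\tau})^{2H}.
$$

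Next I would set $x := e^{-\tau} \in (0,1]$, so that it remains to verify
$$
(1-x)^{2H} \geq 1 - 2x \qquad \text{for all } x \in (0,1] \text{ and } H \in (0,1).
$$
For $x \in [\tfrac{1}{2},1]$ the right-hand side is non-positive while the left-hand side is non-negative, so the inequality is trivial. For $x \in (0,\tfrac{1}{2})$ the base satisfies $1-x \in (\tfrac{1}{2},1)$, hence the map $p \mapsto (1-x)^p$ is strictly decreasing; since $2H < 2$ we obtain
$$
(1-x)^{2H} \geq (1-x)^2 = 1 - 2x + x^2 \geq 1 - 2x,
$$
which finishes the argument.

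The proof is thus a chain of elementary reductions, and no step presents a serious obstacle; the only real insight is the factorisation of $e^{H\tau}$ in the first step, which turns the mixture of hyperbolic functions and a fractional power into the simple estimate on $(1-x)^{2H}$.
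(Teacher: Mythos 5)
Your proof is correct and follows essentially the same route as the paper: both reduce the claim, after writing $(2\sinh(\tau/2))^{2H}=e^{H\tau}(1-e^{-\tau})^{2H}$, to the elementary inequality $1-(1-e^{-\tau})^{2H}\le 2e^{-\tau}$. The only (cosmetic) difference is in that last step, where the paper splits into the cases $H<\tfrac{1}{2}$ (using $x^{2H}\ge x$) and $H>\tfrac{1}{2}$ (Bernoulli's inequality), while you use the bound $(1-x)^{2H}\ge(1-x)^2\ge 1-2x$, which works uniformly in $H$.
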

\begin{proof}
We first see from the definition of $c_H$ that
\begin{align*}
    2 c_H(\tau)&=e^{-\tau H}+e^{\tau H}-\left(e^{\tfrac{\tau}{2}}-e^{-\tfrac{\tau}{2}}\right)^{2H}=e^{-\tau H}+e^{\tau H}    \left(1-(1-e^{-\tau})^{2H}\right).
\end{align*}
For $H>\frac{1}{2}$, we use Bernoulli's inequality 
$(1-e^{-\tau})^{2H}\geq1-2He^{-\tau}$, while for $H<\frac{1}{2}$ and any $x\in[0,1]$  we have $x^{2H}\geq x$ so that
\begin{align*}
    1-(1-e^{-\tau})^{2H}\leq\begin{cases}1-(1-e^{-\tau})&\leq2e^{-\tau}\quad\textnormal{ for }H\in(0,\frac{1}{2}),
    \\
    1-(1-2He^{-\tau})&\leq2e^{-\tau}\quad\textnormal{ for }H\in(\frac{1}{2},1).
    \end{cases}
\end{align*}
Then we get by reassembling
\begin{equation*}
    2 c_H(\tau)=e^{-\tau H}+e^{\tau H}    \left(1-(1-e^{-\tau})^{2H}\right)\leq e^{-\tau H}+2e^{-\tau(1- H)}.\qedhere
\end{equation*}
\end{proof}
Combining Lemmas~\ref{Lemma:sigmatilde} and~\ref{Lemma:c_H2} we obtain the following lemma, that will be used to show the technical condition \eqref{sum}.

\begin{Lemma}\label{Lemma:limsupsum}
Fix $H_0\in[0,\tfrac{1}{2})\cup(\tfrac{1}{2},1]$. There exist $\Delta_{H_0}\in(0,1)$ and $\delta_{H_0}>0$ such that
\begin{enumerate}
    \item[a)]
    %$\Tilde{\sigma}^2(H)-1\geq \Delta_{H_0}$  
    for any $\tau\geq 0$ and $H\in(H_0-\delta_{H_0},H_0+\delta_{H_0})\cap(0,\tfrac{1}{2})\cup (\tfrac{1}{2},1)$:
    \begin{equation*}
    g_H(\tau)\leq  \tfrac{4}{\Delta_{H_0}}e^{-\tau H(1-H)};
\end{equation*}
    \item[b)]
    for any function $\kappa:(0,\tfrac{1}{2})\cup(\tfrac{1}{2},1)\rightarrow\R^+$, $\tau\geq 0$, and $L\in\N$:
    \begin{equation*}
    \limsup_{H\rightarrow H_0}\sum_{\tau=L}^\infty g_H\left(\frac{\tau}{\kappa(H)}\right)\leq \limsup_{H\rightarrow H_0}\, \frac{4\kappa(H)}{\Delta_{H_0}H(1-H)} e^{-\tfrac{(L-1)H(1-H)}{\kappa(H)}}.
\end{equation*}
\end{enumerate}
\end{Lemma}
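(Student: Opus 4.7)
The plan is to prove (a) directly from the representation \eqref{eq:tilderep} and then to deduce (b) by bounding the resulting geometric series.

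For (a), I will first observe that $r_H(\tau)\geq 0$: writing out the It\^o-isometry for the covariance of the Riemann-Liouville process gives $r_H(\tau)=2He^{-H\tau}\int_0^{1}(e^\tau-u)^{H-\tfrac{1}{2}}(1-u)^{H-\tfrac{1}{2}}\,\d u\geq 0$. Dropping this non-positive contribution in \eqref{eq:tilderep} yields
\begin{equation*}
g_H(\tau)\leq \frac{\Tilde{\sigma}^2(H)}{\Tilde{\sigma}^2(H)-1}\,c_H(\tau).
\end{equation*}
Next, I will apply Lemma~\ref{Lemma:c_H2} and use that $H(1-H)\leq H$ and $H(1-H)\leq 1-H$ for $H\in(0,1)$, so that both $e^{-\tau H}$ and $e^{-\tau(1-H)}$ are bounded by $e^{-\tau H(1-H)}$; this gives $c_H(\tau)\leq \tfrac{3}{2}\,e^{-\tau H(1-H)}$. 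Finally, Lemma~\ref{Lemma:sigmatilde} guarantees that $\Tilde{\sigma}^2$ is continuous with $\Tilde{\sigma}^2(H_0)>1$ for every admissible $H_0\in[0,\tfrac{1}{2})\cup(\tfrac{1}{2},1]$ (including the boundary limits $\Tilde{\sigma}^2(0)=2$ and $\Tilde{\sigma}^2(1)=\infty$), so I can choose $\delta_{H_0}>0$ so small that the prefactor $\tfrac{3}{2}\cdot\tfrac{\Tilde{\sigma}^2(H)}{\Tilde{\sigma}^2(H)-1}$ stays below $4/\Delta_{H_0}$ on the stated neighborhood for a suitable $\Delta_{H_0}\in(0,1)$. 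This establishes (a).

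For (b), I will substitute the bound from (a) into the sum to obtain a geometric series:
\begin{equation*}
\sum_{\tau=L}^\infty g_H\!\left(\tfrac{\tau}{\kappa(H)}\right) \leq \frac{4}{\Delta_{H_0}}\sum_{\tau=L}^\infty e^{-\tau H(1-H)/\kappa(H)} = \frac{4}{\Delta_{H_0}}\cdot\frac{e^{-LH(1-H)/\kappa(H)}}{1-e^{-H(1-H)/\kappa(H)}}.
\end{equation*}
I will then apply the elementary inequality $1-e^{-x}\geq xe^{-x}$ (which follows from $e^x\geq 1+x$) with $x=H(1-H)/\kappa(H)>0$ to bound the denominator from below, which produces the extra factor $\kappa(H)/(H(1-H))$ and upgrades the exponent from $L$ to $L-1$. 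Taking $\limsup_{H\rightarrow H_0}$ on both sides then yields the claim in (b).

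The only mildly delicate point is keeping the prefactor $\Tilde{\sigma}^2(H)/(\Tilde{\sigma}^2(H)-1)$ uniformly bounded on a neighborhood of $H_0$; this reduces to $\Tilde{\sigma}^2$ being bounded away from $1$ there, which is immediate from Lemma~\ref{Lemma:sigmatilde} since the unique minimum of $\Tilde{\sigma}^2$ on $(0,1)$ lies at $H=\tfrac{1}{2}\neq H_0$. Everything else is routine algebraic manipulation.
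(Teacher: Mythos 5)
Your proof is correct and follows essentially the same route as the paper's: drop $-r_H(\tau)$ by positivity, bound $c_H$ via Lemma~\ref{Lemma:c_H2} together with $H(1-H)\leq\min(H,1-H)$, keep the prefactor under control by bounding $\Tilde{\sigma}^2$ away from $1$ near $H_0$ using Lemma~\ref{Lemma:sigmatilde}, and then estimate the tail sum. The only cosmetic difference is that the paper compares the sum with the integral $\int_{L-1}^\infty e^{-\tau H(1-H)/\kappa(H)}\,\d\tau$, whereas you evaluate the geometric series and apply $1-e^{-x}\geq x e^{-x}$; both yield the identical bound.
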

\begin{proof}
By Lemma \ref{Lemma:sigmatilde} we can choose for each $H_0\in[0,\tfrac{1}{2})\cup (\tfrac{1}{2},1]$ a $\delta_{H_0}>0$ such that there exists $0<\Delta_{H_0}<1$ with $\Tilde{\sigma}^2(H)\geq \Delta_{H_0}+1$ for any
$
H\in(H_0-\delta_{H_0},H_0+\delta_{H_0})\cap ((0,\tfrac{1}{2})\cup (\tfrac{1}{2},1))
$.
Using this together with Lemma~\ref{Lemma:c_H2} and representation \eqref{eq:tilderep}, we get
\begin{equation*}
    g_H(\tau)\leq \frac{\Tilde{\sigma}^2(H)}{\Tilde{\sigma}^2(H)-1}\, c_H(\tau)\leq  \tfrac{2}{\Delta_{H_0}}\left(e^{-\tau H}+e^{-\tau(1-H)} \right)\leq  \tfrac{4}{\Delta_{H_0}}e^{-\tau H(1-H)}.
\end{equation*}
From this we get
\begin{align*}
    \limsup_{H\rightarrow H_0}\sum_{\tau=L}^\infty g_H  \left(\tfrac{\tau}{\kappa(H)}\right)&\leq\limsup_{H\rightarrow H_0}\sum_{\tau=L}^\infty \tfrac{4}{\Delta_{H_0}} e^{-\tfrac{\tau H(1-H)}{\kappa(H)}}\\
    &\leq\limsup_{H\rightarrow H_0}  \tfrac{4}{\Delta_{H_0}}\int_{L-1}^\infty e^{-\tfrac{\tau H(1-H)}{\kappa(H)}}\d \tau\\
    &=\limsup_{H\rightarrow H_0} \tfrac{4\kappa(H)}{\Delta_{H_0}H(1-H)}e^{-\tfrac{(L-1)H(1-H)}{\kappa(H)}}.\qedhere
\end{align*}
\end{proof}
The next lemma is used to show the technical condition \eqref{log}.
\begin{Lemma}\label{Lemma:int}
For $H\in(0,\tfrac{1}{2})\cup (\tfrac{1}{2},1)$ and $\tau\geq0$ the following inequality holds:
\begin{equation}
    g_H(\tau)\geq e^{-\tau H}.
\end{equation}
\end{Lemma}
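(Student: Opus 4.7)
The plan is to leverage the integral representation \eqref{integralrepresentation} together with the normalisation identity \eqref{const=int}. Writing both sides relative to $e^{-H\tau}$, the claim $g_H(\tau)\geq e^{-H\tau}$ is equivalent to
\begin{equation*}
\int_0^\infty \bigl((1+s)^{H-\tfrac{1}{2}}-s^{H-\tfrac{1}{2}}\bigr)\bigl((e^\tau+s)^{H-\tfrac{1}{2}}-s^{H-\tfrac{1}{2}}\bigr)\,\d s \,\geq\, \int_0^\infty \bigl((1+s)^{H-\tfrac{1}{2}}-s^{H-\tfrac{1}{2}}\bigr)^2\,\d s,
\end{equation*}
after multiplying by the positive constant $\sigma_H^2-\tfrac{1}{2H}$ and using the definition of $K_\tau^H$. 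Moving the right-hand side into the integral, this reduces to showing
\begin{equation*}
\int_0^\infty \bigl((1+s)^{H-\tfrac{1}{2}}-s^{H-\tfrac{1}{2}}\bigr)\bigl((e^\tau+s)^{H-\tfrac{1}{2}}-(1+s)^{H-\tfrac{1}{2}}\bigr)\,\d s \,\geq\, 0.
\end{equation*}

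The main (and essentially only) observation is that the two factors in the integrand carry the same sign for every $s>0$. Indeed, if $H\in(0,\tfrac{1}{2})$, the map $x\mapsto x^{H-\tfrac{1}{2}}$ is strictly decreasing, so since $e^\tau\geq 1$, both factors are non-positive. If instead $H\in(\tfrac{1}{2},1)$, the same map is strictly increasing and both factors are non-negative. In either case the product is non-negative, so the integral is non-negative and the inequality follows.

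I do not expect any real obstacle here: the argument is just a sign check on a pair of monotone-difference factors, exactly in the same spirit as the sign observation on $k_t^H$ and $K_\tau^H$ recorded after Lemma~\ref{lem:representationsofg_H}. At $\tau=0$ equality holds, consistent with $g_H(0)=1$.
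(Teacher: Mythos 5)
Your proof is correct and is essentially the paper's argument in a slightly rearranged form: the paper establishes the pointwise bound $K_0^H(u)K_\tau^H(u)\geq e^{-\tau H}K_0^H(u)^2$ via the monotonicity of $x\mapsto\lvert u^{H-\tfrac{1}{2}}-(x+u)^{H-\tfrac{1}{2}}\rvert$ and the constant sign of $K_\tau^H$, which after subtraction is exactly your same-sign factorisation of $\bigl((1+s)^{H-\tfrac{1}{2}}-s^{H-\tfrac{1}{2}}\bigr)\bigl((e^\tau+s)^{H-\tfrac{1}{2}}-(1+s)^{H-\tfrac{1}{2}}\bigr)$. No gaps.
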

\begin{proof}
We first notice that for $H\in(0,1)$ and any $u\geq0$ the function 
\begin{equation*}
    \R^+_0\rightarrow\R^+_0,\, x\mapsto \left\lvert u^{H-\tfrac{1}{2}}-(x+u)^{H-\tfrac{1}{2}}\right\rvert = \left\lvert H-\frac{1}{2}\right\rvert\cdot \int_0^x (z+u)^{H-\frac{3}{2}}\d z
\end{equation*}
is increasing and since the product $K_\tau^H K_0^H$ is always positive we can estimate
\begin{equation*}
     K_0^H(u)K_\tau^H(u)=\left\lvert K_0^H(u)\right\rvert \, e^{-\tau H}\, \left\lvert u^{H-\tfrac{1}{2}}-(e^\tau+u)^{H-\tfrac{1}{2}}\right\rvert\geq e^{-\tau H} K_0^H(u) ^2.
\end{equation*}
This implies for any $\tau\geq0$
\begin{align*}
    g_H(\tau)&=\left(\int_0^\infty K_0^H(u)^2\d u\right)^{-1}\int_0^\infty K_0^H(u)K_\tau^H(u)\d u \geq e^{-\tau H}. \qedhere
\end{align*}
\end{proof}
\subsection{Continuity of $\theta(M^H)$ for $H\neq\tfrac{1}{2}$}

\begin{proof}[Proof of Theorem~\ref{thm:continuity}, part 1 of 3] We prove continuity of the function $H\mapsto\theta(M^H)$ on $(0,\tfrac{1}{2})\cup (\tfrac{1}{2},1)$.

The goal is to apply Lemma~\ref{KilianLemma} to the sequence of correlation functions $A_H(\tau):=g_H(\tau)$ for $H\to H_0$, where $A_\infty(\tau):=g_{H_0}(\tau)$. Since the correlation functions $g_H(\tau)$ are continuous in $H$ for each point $\tau$, we only have to verify the technical conditions of Lemma~\ref{KilianLemma}.

For any $H_0\in(0,\tfrac{1}{2})\cup (\tfrac{1}{2},1)$ by Lemma \ref{Lemma:limsupsum} there exist $\Delta_{H_0}>0$ and $0<\delta_{H_0}<\min(H_0,1-H_0)$ such that
    for any $\ell,L\in\N$ we get
\begin{equation*}
    \sup_{H\in(H_0-\delta_{H_0},H_0+\delta_{H_0})}\sum_{\tau=L}^\infty g_H(\tfrac{\tau}{\ell})\leq\sup_{H\in(H_0-\delta_{H_0},H_0+\delta_{H_0})} \tfrac{4\ell}{\Delta_{H_0}H(1-H)}e^{-\tfrac{(L-1)H(1-H)}{\ell}},
\end{equation*}
which converges to zero for $L\to\infty$, showing \eqref{sum}.
    Further,  by Lemma \ref{Lemma:int},
\begin{align}
    \log(\epsilon)^2 \sup_{H\in(H_0-\delta_{H_0},H_0+\delta_{H_0}),\tau\in[0,\epsilon]}(1-g_H(\tau))
    &\leq \log(\epsilon)^2\left(1-e^{-\epsilon(\delta_{H_0}+H_0)}\right) \notag \\
    &\leq (\delta_{H_0}+H_0)\log(\epsilon)^2 \epsilon,
    \label{eqn:analogously}
\end{align}
which converges to $0$ for $\epsilon\to 0$ thus showing condition \eqref{log} for $\eta=2$.
   To verify condition \eqref{frac} we use Lemma \ref{Lemma:limsupsum} $a)$ to see that for $\tau>1$
\begin{equation*}
    \frac{\log g_{H_0}(\tau)}{\log \tau}\leq \frac{\log\left(\frac{4}{\Delta_{H_0}}\right)}{\log \tau} -\frac{\tau H_0(1-H_0)}{\log \tau},
\end{equation*}
which converges to $-\infty$ for $\tau\to\infty$. Thus, the claim follows from Lemma \ref{KilianLemma}.
\end{proof}
\section{Asymptotics of $\theta(M^H)$}\label{chapter:H1}
\subsection{Asymptotics for $H\downarrow0$}
The goal of this section is to prove Theorem~\ref{Theorem:main} a). We start with a technical lemma. 
\begin{Lemma}\label{Lemma:2F1}
For $H\in(0,\tfrac{1}{2})$ and $\tau>0$ the following inequality holds:
\begin{equation*}
    1\leq{}_{2}F_1\left(1,\tfrac{1}{2}-H,\tfrac{3}{2}+H,e^{-\tau}\right)\leq\frac{\Gamma(H+\tfrac{3}{2})}{\Gamma(\tfrac{3}{2}-H)\Gamma(2H+1)}\,(1-e^{-\tau})^{-1}.
\end{equation*}
\end{Lemma}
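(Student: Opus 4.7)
The plan is to deduce both inequalities directly from the Euler integral representation of the hypergeometric function. For $H\in(0,\tfrac{1}{2})$ both $b=\tfrac{1}{2}-H$ and $c-b=1+2H$ are strictly positive, so for $z=e^{-\tau}\in(0,1)$ the representation
\[
    {}_2F_1(1,\tfrac{1}{2}-H,\tfrac{3}{2}+H,z) = \frac{\Gamma(\tfrac{3}{2}+H)}{\Gamma(\tfrac{1}{2}-H)\Gamma(2H+1)}\int_0^1 t^{-\tfrac{1}{2}-H}(1-t)^{2H}(1-tz)^{-1}\d t
\]
is available, and both inequalities will be extracted from it by pointwise estimates of the integrand.

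For the lower bound, I would note that $(1-tz)^{-1}\geq 1$ for $t,z\in[0,1)$, so the integral is at least $B(\tfrac{1}{2}-H,2H+1)=\Gamma(\tfrac{1}{2}-H)\Gamma(2H+1)/\Gamma(\tfrac{3}{2}+H)$, which cancels exactly against the prefactor and leaves $1$. (An equivalent and perhaps more transparent argument is that the Taylor series has only non-negative coefficients, with constant term $1$.)

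For the upper bound, I would use the two crude estimates $(1-t)^{2H}\leq 1$ on $[0,1]$ (valid because $H>0$) and $(1-tz)^{-1}\leq(1-z)^{-1}$ (valid because $t\leq 1$), which reduce the integral to the elementary
\[
    \int_0^1 t^{-\tfrac{1}{2}-H}\d t = \frac{1}{\tfrac{1}{2}-H}.
\]
Invoking the recursion $\Gamma(\tfrac{3}{2}-H)=(\tfrac{1}{2}-H)\Gamma(\tfrac{1}{2}-H)$ to absorb the factor $(\tfrac{1}{2}-H)^{-1}$ into the prefactor produces precisely
\[
    \frac{\Gamma(\tfrac{3}{2}+H)}{\Gamma(\tfrac{3}{2}-H)\Gamma(2H+1)}\,(1-z)^{-1},
\]
which is the stated bound with $z=e^{-\tau}$.

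I do not foresee a real obstacle here. The only point worth noting is the deliberate coarseness of discarding the factor $(1-t)^{2H}$: keeping it would yield the strictly sharper estimate $(1-e^{-\tau})^{-1}$ with no prefactor at all, but the slightly weaker form above is what is invoked in the sequel, and it arises from one clean application of the Beta integral rather than two.
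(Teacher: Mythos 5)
Your proof is correct and takes essentially the same route as the paper: the lower bound via non-negativity of the series terms (equivalently, $(1-tz)^{-1}\geq 1$ in the Euler integral), and the upper bound by discarding $(1-t)^{2H}\leq 1$ and bounding $(1-e^{-\tau}t)^{-1}\leq(1-e^{-\tau})^{-1}$ in the integral representation, with the factor $(\tfrac{1}{2}-H)^{-1}$ absorbed via $\Gamma(\tfrac{3}{2}-H)=(\tfrac{1}{2}-H)\Gamma(\tfrac{1}{2}-H)$. There is nothing to add.
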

\begin{proof}
The first inequality follows from the series representation of the hypergeometric function, as all terms in the series are non-negative (because $H<\frac{1}{2}$).
For the second inequality we use the integral representation of the hypergeometric function (see e.g. equation 60:3:3 in \cite{oldham2009atlas})
and estimate
\begin{align*}
    {}_{2}F_1\left(1,\tfrac{1}{2}-H,\tfrac{3}{2}+H,e^{-\tau}\right)&=\frac{\Gamma(H+\tfrac{3}{2})}{\Gamma(\tfrac{1}{2}-H)\Gamma(2H+1)}\int_0^1t^{-H-\tfrac{1}{2}}(1-t)^{2H}(1-e^{-\tau}t)^{-1}\d t\\
    &\leq\frac{\Gamma(H+\tfrac{3}{2})}{\Gamma(\tfrac{1}{2}-H)\Gamma(2H+1)}\int_0^1t^{-H-\tfrac{1}{2}} (1-e^{-\tau})^{-1}\d t\\
    &=\frac{\Gamma(H+\tfrac{3}{2})}{\Gamma(\tfrac{3}{2}-H)\Gamma(2H+1)}\,(1-e^{-\tau})^{-1}.\qedhere
\end{align*}
\end{proof}
We have collected all the necessary material to give the proof of Theorem \ref{Theorem:main} a).
\begin{proof}[Proof of Theorem \ref{Theorem:main} a)]
Our goal is to apply Lemma~\ref{KilianLemma}. Here we look at the sequence of correlation functions $A_H(\tau):=g_H(\tfrac{\tau}{H})$ for $H\downarrow 0$. We are going to show that $A_H(\tau)\to A_\infty(\tau):=e^{-\tau}$ pointwise and that the technical conditions of Lemma~\ref{KilianLemma} are satisfied. This yields that the persistence exponents of the GSPs corresponding to $A_H$ converge to the persistence exponent of the Ornstein-Uhlenbeck process, which equals $1$ (as can be obtained by direct computation, cf.\ \cite{slepian1962one}, or by using the fact that the Ornstein-Uhlenbeck process is the Lamperti transform of Brownian motion).
Since $g_H$ is the correlation function of $\Lamp M^H$, $A_H$ is the correlation function of $((\Lamp M^H)_{\tau/H})$ so that the persistence exponent corresponding to $A_H$ equals $\theta(M^H)/H$, as the following computation shows:
\begin{equation} \label{eqn:changeofpe}
\lim_{T\to\infty} \frac{1}{T} \log \Pro\left[ \sup_{\tau\in[0,T]} (\Lamp M^H)_{\tau/H} \right] = \lim_{T\to\infty} \frac{1/H}{T/H} \log \Pro\left[ \sup_{\tau\in[0,T/H]} (\Lamp M^H)_{\tau} \right] = \theta(M^H)/H.
\end{equation}
Let us therefore finish the proof with the verification of the application of Lemma~\ref{KilianLemma}:
{\it Step 1:} Pointwise convergence. By Lemma \ref{Lemma:2F1},  for $H\downarrow0$,
\begin{align*}
    1\leq{}_{2}F_1\left(1,\tfrac{1}{2}-H,\tfrac{3}{2}+H,e^{-\tfrac{\tau}{H}}\right)\leq\frac{\Gamma(H+\tfrac{3}{2})}{\Gamma(\tfrac{3}{2}-H)\Gamma(2H+1)}\,(1-e^{-\tfrac{\tau}{H}})^{-1}\rightarrow 1,
\end{align*}
from which we deduce that
\begin{equation*}
        r_H(\tfrac{\tau}{H})=\tfrac{4H}{1+2H}\, e^{-\tfrac{\tau}{2H}}\,{}_{2}F_1\left(1,\tfrac{1}{2}-H,\tfrac{3}{2}+H,e^{-\tfrac{\tau}{H}}\right) \to 0.
\end{equation*}
Further, it is immediate that for $H\downarrow0$ one has $2c_H(\tfrac{\tau}{H}) \to  e^{-\tau}$, which in combination with the result $\Tilde{\sigma}^2(H)\to 2$ for $H\downarrow 0$ in Lemma~\ref{Lemma:sigmatilde} yields
$$
    g_H(\tfrac{\tau}{H})= (\Tilde{\sigma}^2(H)-1)^{-1}(\Tilde{\sigma}^2(H)c_H(\tfrac{\tau}{H})-r_H(\tfrac{\tau}{H})) \to  e^{-\tau}.
$$    
{\it Step 2:} Verification of the technical conditions of Lemma~\ref{KilianLemma}. First, condition \eqref{frac} is easily verified with $A_\infty(\tau)=e^{-\tau}$.
By Lemma \ref{Lemma:limsupsum} b) we get for the choice $\kappa(H):=\ell H$ for any $\ell\in\N$
\begin{equation*}
    \limsup_{H\downarrow 0}\sum_{\tau=L}^\infty g_H(\tfrac{\tau}{\ell H})\leq\limsup_{H\downarrow 0}\frac{4\ell}{\Delta_{0}(1-H)} e^{-\tfrac{(L-1) (1-H)}{\ell}}=\frac{4\ell}{\Delta_{0}} e^{-\tfrac{(L-1)}{\ell}},
\end{equation*}
which converges to $0$ for $L\to\infty$, showing \eqref{sum}.
Lastly, analagously to (\ref{eqn:analogously}) above, we can show \eqref{log}  using Lemma~\ref{Lemma:int}. 
\end{proof}
 
\subsection{Asymptotics for $H\uparrow1$}
Similarly to the last section, the goal of this section is to prove Theorem~\ref{Theorem:main} b). Again, we start with a technical lemma. 
\begin{Lemma}\label{Lemma:Beta}
There exists a $\delta>0$ such that for any $H\in(1-\delta,1)$ we have
\begin{equation}\label{eq:Betaest}
    \frac{1}{4}\frac{H-\tfrac{1}{2}}{1-H}\geq \sigma_H^2-\frac{1}{2H}.
\end{equation}
\end{Lemma}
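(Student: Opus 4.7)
The plan is to recast the inequality \eqref{eq:Betaest} into a form amenable to a Taylor expansion at $H=1$. Using the identity $\sigma_H^2 - \tfrac{1}{2H} = \frac{\tilde{\sigma}^2(H)-1}{2H}$ that follows directly from \eqref{eq:tildedef}, the inequality \eqref{eq:Betaest} is equivalent to
$$
\tilde{\sigma}^2(H)\,(1-H) \leq 1 - \tfrac{5H}{4} + \tfrac{H^2}{2}.
$$
Multiplying the explicit formula \eqref{eq:logconv} by $(1-H)$ and using $\Gamma(1-H)(1-H) = \Gamma(2-H)$, one obtains the nice representation
$$
F(H) := \tilde{\sigma}^2(H)\,(1-H) = \pi^{-1/2}\,2^{1-2H}\,\Gamma(H+\tfrac{1}{2})\,\Gamma(2-H),
$$
which is now smooth at $H=1$. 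A direct evaluation using $\Gamma(3/2)=\tfrac{\sqrt{\pi}}{2}$ and $\Gamma(1)=1$ yields $F(1)=\tfrac{1}{4}$, which coincides with the value $1-\tfrac{5}{4}+\tfrac{1}{2}=\tfrac{1}{4}$ of the right-hand side at $H=1$. So the inequality is an equality at the endpoint, and the sign of the difference is decided by the first derivative.

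Next I would compute $F'(1)$ using the logarithmic derivative
$$
\frac{F'(H)}{F(H)} = -2\log 2 + \psi(H+\tfrac{1}{2}) - \psi(2-H),
$$
where $\psi=\Gamma'/\Gamma$ is the digamma function. Using $\psi(1)=-\gamma$ and $\psi(3/2)=\psi(1/2)+2 = 2-\gamma-2\log 2$ (the values already cited in the proof of Lemma~\ref{Lemma:sigmatilde}), one finds $F'(1)/F(1) = 2 - 4\log 2$, hence $F'(1) = \tfrac{1}{2} - \log 2$. Meanwhile the right-hand side $1-\tfrac{5H}{4}+\tfrac{H^2}{2}$ has derivative $-\tfrac{1}{4}$ at $H=1$. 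Therefore
$$
\left.\frac{d}{dH}\bigl[\text{RHS} - F(H)\bigr]\right|_{H=1} = -\tfrac{1}{4} - \bigl(\tfrac{1}{2} - \log 2\bigr) = \log 2 - \tfrac{3}{4} < 0.
$$
Since $\text{RHS} - F(H)$ vanishes at $H=1$ with negative derivative, it is strictly positive for $H$ slightly less than $1$, which by continuity provides the desired $\delta > 0$.

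The only delicate step is the derivative computation, but this is routine given the explicit values of the digamma function at $1$ and $3/2$ that are already used earlier in the paper. The positivity of $\tfrac{3}{4} - \log 2 \approx 0.057$ is the numerical fact that makes the inequality work; it also confirms that the lemma sharpens the asymptotic $\sigma_H^2 \sim \frac{1}{8(1-H)}$ obtained from Lemma~\ref{Lemma:sigmatilde} by the right constant factor. No further technical difficulty is expected beyond keeping track of the logarithmic derivatives correctly.
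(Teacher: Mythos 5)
Your proof is correct and follows essentially the same route as the paper: both reduce \eqref{eq:Betaest} to a first-order Taylor expansion of $2^{1-2H}\Gamma(H+\tfrac12)\Gamma(2-H)$ at $H=1$ and hinge on the same numerical fact $\tfrac34-\log 2>0$ (the paper's coefficient $\tfrac32-2\log 2$ is exactly twice yours), using the identical digamma values $\psi(1)=-\gamma$ and $\psi(\tfrac32)=2-\gamma-2\log 2$. The only difference is presentational: you obtain the linear coefficient by logarithmic differentiation of the product, whereas the paper multiplies out three separate Taylor expansions after discarding the $(H-1)^2$ term.
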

\begin{proof}
Using (\ref{eq:logconv}), we can see that (\ref{eq:Betaest}) is equivalent to
$$
(H-1)^2-\frac{1}{2} (H-1) +\frac {1}{2}- \frac{2^{2(1-H)}}{\sqrt{\pi}} \Gamma(2-H) \Gamma\left(H+\frac{1}{2}\right) \geq 0.
$$
We claim that even
$$
~~~~~~~~~~~~ -\frac{1}{2} (H-1) +\frac {1}{2}- \frac{2^{2(1-H)}}{\sqrt{\pi}} \Gamma(2-H) \Gamma\left(H+\frac{1}{2}\right) \geq 0
$$
for $H$ close to $1$. We use the Taylor expansions:
\begin{eqnarray*}
2^{2(1-H)} &=& e^{(1-H) 2 \log(2)} = 1 + (1-H) 2 \log(2) + O((1-H)^2),
\\
\Gamma(2-H) &=& \Gamma(1) - \Gamma'(1)(H-1) + O((1-H)^2)=1+\Gamma'(1)(1-H) + O((1-H)^2),
\\
\Gamma\left(H+\frac{1}{2}\right) &=& \Gamma\left(\frac{3}{2}\right) + \Gamma'\left(\frac{3}{2}\right)(H-1) + O((1-H)^2) =  \frac{\sqrt{\pi}}{2}  - \Gamma'\left(\frac{3}{2}\right)(1-H) + O((1-H)^2).
\end{eqnarray*}
Inserting this gives
\begin{eqnarray}
&&-\frac{1}{2} (H-1) +\frac {1}{2}- \frac{2^{2(1-H)}}{\sqrt{\pi}} \Gamma(2-H) \Gamma\left(H+\frac{1}{2}\right) \notag
\\
&=&\frac{1}{2} (1-H) +\frac {1}{2}  \notag
\\
&& -\frac{1}{\sqrt{\pi}} \left[1 + (1-H) 2 \log(2) \right]\left[1+ \Gamma'(1)(1-H)\right]\left[  \frac{ \sqrt{\pi}}{2} - \Gamma'\left(\frac{3}{2}\right)(1-H) \right] + O((1-H)^2)  \notag
\\
&=&\frac{1}{2} (1-H) +\frac {1}{2}-\frac{1}{\sqrt{\pi}} \left( \frac{ \sqrt{\pi}}{2} - (1-H)\left(\Gamma'\left(\frac{3}{2}\right)-\frac{\sqrt\pi}{2} \Gamma'(1)-\frac{\sqrt\pi}{2}2\log 2 \right)\right)
+ O((1-H)^2)  \notag
\\
&=&(1-H)\left( \frac{1}{2}  +\frac{1}{\sqrt\pi} \Gamma'\left(\frac{3}{2}\right)-  \frac{1}{2} \Gamma'(1)-\log 2  \right) \notag
+ O((1-H)^2)
\\
&=&(1-H)\left( \frac{3}{2}  -2\log 2\right)
+ O((1-H)^2). \label{eqn:newrs55}
\end{eqnarray}
Here we used the tables of chapters $43$:$7$ and $44$:$7$ of \cite{oldham2009atlas} to calculate
%\begin{align*}
$$
    \Gamma'\left(\frac{3}{2}\right)=\Psi\left(\frac{3}{2}\right)\Gamma\left(\frac{3}{2}\right)=\frac{\sqrt{\pi}}{2}(2-\gamma-2\log 2 ),\qquad \Gamma'(1)=\Psi(1)\Gamma(1)=-\gamma.
$$
%\end{align*}
Since $\frac{3}{2}  -2\log 2>0$,
%(see e.g. $44:7:1$ in \cite{oldham2009atlas}), 
the term in (\ref{eqn:newrs55}) has to be positive for $H$ close to $1$.
\end{proof}
The following estimate gives a lower bound for $g_H(\tau)$, which is used to show convergence.
\begin{Lemma}\label{Conjecture:int2}
There exists a $\delta>0$ such that for any $H\in(1-\delta,1)$ and $\tau\geq0$
\begin{equation*}
    g_H(\tau)\geq e^{-\tau(1-H)}.
\end{equation*}
\end{Lemma}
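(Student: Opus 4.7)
The plan is to prove the pointwise bound $g_H(\tau) \geq e^{-\tau(1-H)}$ directly from the integral representation~\eqref{integralrepresentation}, exploiting the concavity of $x \mapsto x^{H-1/2}$ on $(0,\infty)$, which holds because $H - \tfrac{1}{2} \in (0, \tfrac{1}{2})$ for $H \in (\tfrac{1}{2}, 1)$. In fact, this route should yield the inequality for every $H \in (\tfrac{1}{2},1)$, so Lemma~\ref{Lemma:Beta} would not be needed here.

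First I would perform the substitution $s = e^\tau u$ in $\int_0^\infty K_0^H(s) K_\tau^H(s)\,\d s$. A short calculation based on the factorisations
$$K_0^H(e^\tau u) = e^{\tau(H-1/2)}\bigl[(e^{-\tau}+u)^{H-1/2} - u^{H-1/2}\bigr], \qquad K_\tau^H(e^\tau u) = e^{-\tau/2}\, K_0^H(u),$$
together with~\eqref{const=int}, transforms the desired inequality $g_H(\tau)\, e^{\tau(1-H)} \geq 1$ into the equivalent statement
$$\int_0^\infty e^\tau\bigl[(e^{-\tau}+u)^{H-1/2} - u^{H-1/2}\bigr] K_0^H(u)\,\d u \;\geq\; \int_0^\infty K_0^H(u)^2\,\d u.$$

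The second step is to establish the pointwise inequality $e^\tau\bigl[(e^{-\tau}+u)^{H-1/2} - u^{H-1/2}\bigr] \geq K_0^H(u) = (1+u)^{H-1/2} - u^{H-1/2}$ for all $u \geq 0$ and $\tau \geq 0$. Viewing both sides as secant slopes of $f(x) = x^{H-1/2}$ at the base point $u$, this is exactly the claim that $\Delta \mapsto \Delta^{-1}(f(u+\Delta) - f(u))$ is non-increasing in $\Delta > 0$, applied to $\Delta_1 = e^{-\tau} \leq 1 = \Delta_2$. Since $f$ is strictly concave on $(0,\infty)$, this monotonicity is immediate. Multiplying by the non-negative weight $K_0^H(u)$ and integrating then gives the displayed inequality above, using \eqref{const=int} one more time on the right-hand side.

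I do not anticipate a genuine obstacle here; the only thing to handle with care is the bookkeeping of the substitution and the two factorisations, after which the core inequality reduces to elementary concavity of a power function.
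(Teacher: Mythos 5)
Your proof is correct, and it takes a genuinely different route from the paper's. After the substitution $s=e^\tau u$ (which the paper also exploits, in the form of the identity \eqref{eq:reduct}), both arguments must establish
$\int_0^\infty e^\tau\bigl[(e^{-\tau}+u)^{H-\tfrac12}-u^{H-\tfrac12}\bigr]K_0^H(u)\,\d u\ \geq\ \int_0^\infty K_0^H(u)^2\,\d u$,
which is exactly the paper's inequality \eqref{eq:nearlythere} rewritten in the rescaled variable. The paper gets there by bounding $K_0^H(u)$ from below by $(H-\tfrac12)(x+u)^{H-\tfrac32}$, evaluating a double integral to produce the constant $\tfrac14\,\tfrac{H-1/2}{1-H}$, and then invoking the Taylor-expansion estimate of Lemma~\ref{Lemma:Beta} to compare that constant with $\sigma_H^2-\tfrac{1}{2H}$; this last comparison is precisely what forces the restriction to $H$ near $1$. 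You instead observe that the required comparison already holds pointwise in $u$: both $e^\tau\bigl[(u+e^{-\tau})^{H-\tfrac12}-u^{H-\tfrac12}\bigr]$ and $K_0^H(u)$ are secant slopes of the concave increasing function $x\mapsto x^{H-\tfrac12}$ based at $u$, with increments $e^{-\tau}\leq 1$, so the former dominates the latter; multiplying by the non-negative weight $K_0^H(u)$ (non-negative since $H>\tfrac12$) and integrating finishes the proof. This is more elementary and strictly stronger: it yields the bound for every $H\in(\tfrac12,1)$, not just near $1$, and it renders Lemma~\ref{Lemma:Beta} (whose only use in the paper is in this proof) unnecessary.
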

\begin{proof}
Let $\delta>0$ be the same as in Lemma \ref{Lemma:Beta} and fix $H\in(1-\delta,1)$.

{\it Step 1:} We start by showing that for any $b\geq1$
\begin{align}\label{eq:mainestimate}
     \int_0^\infty K_0^H(u) \left((b+u)^{H-\tfrac{1}{2}}-(1+u)^{H-\tfrac{1}{2}}\right)\d u\geq \left(b^{2H-1}-1\right)\int_0^\infty K_0^H(u)^2\d u .
\end{align}
The left hand side of equation \eqref{eq:mainestimate} equals
$$
   (H-\tfrac{1}{2})\int_0^\infty K_0^H(u)\int_1^b (x+u)^{H-\tfrac{3}{2}}\d x \d u\\
    =(H-\tfrac{1}{2})\int_0^\infty \int_1^b K_0^H(u)(x+u)^{H-\tfrac{3}{2}}\d x \d u.
$$
We further see the inequality (using $H\geq \frac{1}{2}$):
\begin{align}
K_0^H(u)&=(1+u)^{H-\tfrac{1}{2}}-u^{H-\tfrac{1}{2}}=(H-\tfrac{1}{2}) \int_0^1 (z+u)^{H-\frac{3}{2}} \d z \notag \\
&\geq (H-\tfrac{1}{2}) (1+u)^{H-\tfrac{3}{2}} \geq (H-\tfrac{1}{2}) (x+u)^{H-\tfrac{3}{2}},
\end{align}
for any $x\geq1$.
By combining the two results, we obtain
\begin{align*}
    \int_0^\infty K_0^H(u) \left((b+u)^{H-\tfrac{1}{2}}-(1+u)^{H-\tfrac{1}{2}}\right)\d u\geq (H-\tfrac{1}{2})^2\int_0^\infty \int_1^b (x+u)^{2H-3}\d x \d u.
\end{align*}
The order of integration can then be exchanged by Tonelli's Theorem and
\begin{align*}
    &\int_1^b\int_0^\infty  (x+u)^{2H-3}\d u \d x
    = \frac{1}{2-2H}\int_1^b x^{2H-2}\d x
    = \frac{b^{2H-1}-1}{(2-2H)(2H-1)} .
\end{align*}
This yields the inequality
\begin{align*}
    \int_0^\infty K_0^H(u) \left((b+u)^{H-\tfrac{1}{2}}-(1+u)^{H-\tfrac{1}{2}}\right)\d u&\geq(H-\tfrac{1}{2})^2\frac{b^{2H-1}-1}{(2-2H)(2H-1)}
    \\
    &=(b^{2H-1}-1)\, \frac{1}{4}\, \frac{H-\tfrac{1}{2}}{1-H}
    \\
    &\geq (b^{2H-1}-1)(\sigma_H^2-\tfrac{1}{2H})\\
    &=\left(b^{2H-1}-1\right)\int_0^\infty K_0^H(u)^2\d u,
\end{align*}
where we applied the estimate of Lemma \ref{Lemma:Beta} in the third step. 
{\it Step 2:} We show that for any $b\geq 1$
\begin{equation}\label{eq:nearlythere}
    b^{1-2H}\int_0^\infty K_0^H(u)\left((b+u)^{H-\tfrac{1}{2}}-u^{H-\tfrac{1}{2}}\right)\d u \geq \int_0^\infty K_0^H(u)^2\d u. 
\end{equation}
Indeed, using \eqref{eq:mainestimate} we obtain
\begin{align*}
    &b^{1-2H}\int_0^\infty K_0^H(u)\left((b+u)^{H-\tfrac{1}{2}}-u^{H-\tfrac{1}{2}}\right)\d u\\
    &\quad= b^{1-2H} \int_0^\infty K_0^H(u)\left((b+u)^{H-\tfrac{1}{2}}-(1+u)^{H-\tfrac{1}{2}}+(1+u)^{H-\tfrac{1}{2}}-u^{H-\tfrac{1}{2}}\right)\d u\\
    &\quad =b^{1-2H}\left(\int_0^\infty K_0^H(u)\left((b+u)^{H-\tfrac{1}{2}}-(1+u)^{H-\tfrac{1}{2}}\right)\d u+\int_0^\infty K_0^H(u)^2\d u\right)\\
    &\quad \geq b^{1-2H}\left((b^{2H-1}-1)\int_0^\infty  K_0^H(u)^2\d u+\int_0^\infty K_0^H(u)^2\d u\right)\\
    &\quad =\int_0^\infty  K_0^H(u)^2\d u.
\end{align*}
{\it Step 3:} We plug in the choice $b=e^{\tau}$ into (\ref{eq:nearlythere}) and obtain:
\begin{align*}
    g_H\left(\tau\right)&=\left(\int_0^\infty K_0^H(u)^2\d u\right)^{-1}e^{-\tau H}\int_0^\infty K_0^H(u)\left((e^\tau+u)^{H-\tfrac{1}{2}}-u^{H-\tfrac{1}{2}}\right)\d u\\
    &= \left(\int_0^\infty K_0^H(u)^2\d u\right)^{-1}e^{-\tau(1-H)}e^{\tau(1-2H)}\int_0^\infty K_0^H(u)\left((e^\tau+u)^{H-\tfrac{1}{2}}-u^{H-\tfrac{1}{2}}\right)\d u\\
    &\geq e^{-\tau(1-H)}.\qedhere
\end{align*}
\end{proof}
We have collected all the necessary material to give the proof of Theorem \ref{Theorem:main} b).

\begin{proof}[Proof of Theorem \ref{Theorem:main} $b)$]
Our goal is to apply Lemma~\ref{KilianLemma}. This time we look at the sequence of correlation functions $A_H(\tau):=g_H\left(\frac{\tau}{1-H}\right)$ for $H\uparrow 1$. We are going to show that $A_H(\tau)\to A_\infty(\tau):=e^{-\tau}$ pointwise and that the technical conditions of Lemma~\ref{KilianLemma} are satisfied. This yields that the sequence of persistence exponents of the GSPs corresponding to $A_H$, which are given by $\theta(M^H)/(1-H)$ (the proof of which is analogous to (\ref{eqn:changeofpe})), converge to the persistence exponent of the Ornstein-Uhlenbeck process, which equals $1$.

{\it Step 1:} Pointwise convergence. We use Lemma~\ref{Conjecture:int2} (for $H$ close to $1$), the fact that $r_H(\tau)\geq 0$, and Lemma~\ref{Lemma:c_H2} to see that
\begin{equation*}
    e^{-\tau} \leq g_H\left(\frac{\tau}{1-H}\right)\leq \frac{\Tilde{\sigma}_H^2}{\Tilde{\sigma}_H^2-1}\, c_H\left(\frac{\tau}{1-H}\right) \leq \frac{\Tilde{\sigma}_H^2}{\Tilde{\sigma}_H^2-1}\,\left( \tfrac{1}{2}e^{-\tfrac{\tau}{1-H}}+e^{-\tau}\right).
\end{equation*}
Letting $H\uparrow 1$ and recalling  Lemma~\ref{Lemma:sigmatilde} to see that $\frac{\Tilde{\sigma}_H^2}{\Tilde{\sigma}_H^2-1}\to 1$, we obtain that indeed $g_H\left(\frac{\tau}{1-H}\right)\to e^{-\tau}$.

{\it Step 2:} Verification of the technical conditions of Lemma \ref{KilianLemma}. By Lemma \ref{Lemma:limsupsum} b) there exists a $\Delta_1>0$ such that for the choice of $\kappa(H)=\ell (1-H)$ for arbitrary $\ell\in\N$ we get
\begin{equation*}
    \limsup_{H\uparrow 1}\sum_{\tau=L}^\infty g_H(\tfrac{\tau}{\ell (1-H)})\leq\limsup_{H\uparrow 1}\frac{2\ell}{\Delta_{1} H} e^{-\tfrac{(L-1)H}{\ell}} = \frac{2\ell}{\Delta_{1}} e^{-\tfrac{L-1}{\ell}}, 
\end{equation*}
which converges to zero for $L\to\infty$, showing \eqref{sum}.
Using Lemma \ref{Conjecture:int2}, \eqref{log} is straightforward. 
Condition \eqref{frac} is easily verified as $A_\infty(\tau)=e^{-\tau}$. 
\end{proof}

\section{Proofs for the case $H\rightarrow1/2$} \label{chapter:H1/2}
\subsection{Pointwise limit of the correlation functions}
The goal of this subsection is to obtain the pointwise limit of the correlation function of $\Lamp M^H$, i.e.\ of $g_H$ defined in Lemma~\ref{lem:representationsofg_H}, when $H\to\frac{1}{2}$.

\begin{Lemma}\label{Lemma:1/2}
For any $\tau\geq0$ we have:
\begin{equation*}
    \lim_{H\rightarrow\tfrac{1}{2}}g_{H}(\tau)=\frac{3}{\pi^2}\, e^{-\tfrac{\tau}{2}}\int_0^\infty \log(1+\tfrac{1}{u})\log(1+\tfrac{e^\tau}{u})\d u =: g_{\ast,\frac{1}{2}}(\tau).
\end{equation*}
\end{Lemma}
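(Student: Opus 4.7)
The plan is to work with the integral representation (\ref{integralrepresentation}) of $g_H$ together with the normalization (\ref{const=int}), and pass to the limit $H\to\tfrac{1}{2}$ via dominated convergence. The key observation is that both $\int_0^\infty K_0^H(s)K_\tau^H(s)\,\d s$ and $\sigma_H^2-\tfrac{1}{2H}=\int_0^\infty K_0^H(s)^2\,\d s$ vanish like $(H-\tfrac{1}{2})^2$, since the fundamental theorem of calculus applied to $x\mapsto x^{H-\tfrac{1}{2}}$ yields
\begin{equation*}
K_\tau^H(s)=(H-\tfrac{1}{2})\,e^{-H\tau}\int_0^{e^\tau}(u+s)^{H-\tfrac{3}{2}}\,\d u.
\end{equation*}
Dividing numerator and denominator of $g_H$ by $(H-\tfrac{1}{2})^2$ and interchanging limit with integral, one obtains the pointwise limits
\begin{equation*}
\frac{K_\tau^H(s)}{H-\tfrac{1}{2}}\longrightarrow e^{-\tau/2}\log\!\left(1+\frac{e^\tau}{s}\right),\qquad \frac{K_0^H(s)}{H-\tfrac{1}{2}}\longrightarrow\log\!\left(1+\frac{1}{s}\right),
\end{equation*}
so that
\begin{equation*}
\lim_{H\to\tfrac{1}{2}}g_H(\tau)=\frac{e^{-\tau/2}\int_0^\infty\log(1+\tfrac{1}{s})\log(1+\tfrac{e^\tau}{s})\,\d s}{\int_0^\infty\log^2(1+\tfrac{1}{s})\,\d s}.
\end{equation*}

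To justify the interchange I would restrict $H$ to, say, $[\tfrac{1}{4},\tfrac{3}{4}]$ and use the $H$-uniform bound $(u+s)^{H-\tfrac{3}{2}}\leq (u+s)^{-5/4}+(u+s)^{-3/4}$, valid because the exponent is monotone in $H$ separately on $\{u+s\leq 1\}$ and $\{u+s\geq 1\}$. Integrating over $u\in[0,1]$ and $u\in[0,e^\tau]$ respectively, this gives $H$-independent bounds on $|K_0^H(s)/(H-\tfrac{1}{2})|$ and $|K_\tau^H(s)/(H-\tfrac{1}{2})|$, whose product behaves like $O(s^{-1/2})$ as $s\downarrow 0$ and like $O(s^{-3/2})$ as $s\to\infty$, hence is integrable on $(0,\infty)$. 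The same estimate controls $(K_0^H(s)/(H-\tfrac{1}{2}))^2$, so the denominator converges as well.

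It remains to evaluate the constant $\int_0^\infty \log^2(1+\tfrac{1}{s})\,\d s$. The substitution $s=1/u$ turns it into $\int_0^\infty \log^2(1+u)/u^2\,\d u$; one integration by parts (with boundary terms vanishing) reduces it to $2\int_1^\infty \tfrac{\log v}{v(v-1)}\,\d v$, and a further substitution $v=1/w$ together with the geometric expansion of $1/(1-w)$ gives $2\sum_{n\geq 1}1/n^2=\pi^2/3$. Substituting back produces exactly the prefactor $3/\pi^2$ claimed in the statement.

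The main obstacle will be the bookkeeping for the dominating function: each of $|K_0^H(s)/(H-\tfrac{1}{2})|$ and $|K_\tau^H(s)/(H-\tfrac{1}{2})|$ individually decays only like $s^{-3/4}$ at infinity, so integrability of the product (and hence applicability of dominated convergence) must come from multiplying them. Splitting the exponent $H-\tfrac{3}{2}$ into the two extreme values $-5/4$ and $-3/4$ according to whether $u+s\leq 1$ or $u+s\geq 1$ is what makes both the singularity at $s\downarrow 0$ and the decay at $s\to\infty$ cooperate simultaneously.
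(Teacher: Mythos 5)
Your proof is correct, but it takes a genuinely different route from the paper. The paper treats $g_H(\tau)$ as a $0/0$ ratio of the integrals in \eqref{integralrepresentation} and \eqref{const=int} and applies l'H\^{o}spital's rule in $H$ \emph{twice}, which forces it to prove a separate technical lemma (Lemma~\ref{Lemma:L2}) giving integrable majorants for $\partial_H^k(K_\tau^H K_0^H)$, $k\in\{0,1,2\}$, so that differentiation, integration, and the limit can be interchanged. You instead extract the factor $(H-\tfrac{1}{2})$ \emph{exactly} via the fundamental theorem of calculus, $K_\tau^H(s)=(H-\tfrac{1}{2})\,e^{-H\tau}\int_0^{e^\tau}(u+s)^{H-\tfrac{3}{2}}\,\d u$, cancel $(H-\tfrac{1}{2})^2$ between numerator and denominator, and pass to the limit by plain dominated convergence; no differentiation in $H$ is needed at all. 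Your dominating function is sound: the split of the exponent into $-\tfrac54$ on $\{u+s\le 1\}$ and $-\tfrac34$ on $\{u+s\ge 1\}$ gives $|K_0^H(s)/(H-\tfrac12)|=O(s^{-1/4})$ near $0$ and $O(s^{-3/4})$ at infinity (likewise for the $\tau$-factor), so the product is $O(s^{-1/2})$ and $O(s^{-3/2})$ respectively, hence integrable — and, as you note, it is essential that integrability at infinity comes from the product, since each factor alone is not integrable there. Your evaluation of $\int_0^\infty\log^2(1+\tfrac1u)\,\d u=\pi^2/3$ (which the paper merely cites as known) is also correct. What the paper's heavier machinery buys is reusability: Lemma~\ref{Lemma:L2}, in particular its part c), is invoked again in the second part of the proof of Theorem~\ref{thm:continuity} to justify the same l'H\^{o}spital/interchange argument applied to the tail sums $\sum_{\tau\ge L}g_H(\tau/\ell)$ when verifying condition \eqref{sum}; if you adopted your approach throughout, you would need a $\tau$-uniform (summable-in-$\tau$) version of your dominating function to handle that step, which is essentially what Lemma~\ref{Lemma:L2}c) provides. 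For the present lemma alone, your argument is the more elementary and shorter one.
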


We postpone the proof of this lemma and start with a technical result concerning properties of the functions $K_\tau^H(u)=e^{-\tau H}\left((e^\tau+u)^{H-\tfrac{1}{2}}-u^{H-\tfrac{1}{2}}\right)$. These functions appear in the representation of $g_H$, cf.\ (\ref{ugg}), and we shall employ l'H\^{o}spital's rule in the course of the proof of Lemma~\ref{Lemma:1/2} which will require some technical preparation.

As a simplification of the proof of the next lemma, we note that any $\tau\geq0$, $H\in(0,\tfrac{1}{2})\cup(\tfrac{1}{2},1)$ and $u\in\R^+$ we have
\begin{equation}\label{eq:reduct}
    K_\tau^H(u)=e^{-\frac{\tau}{2}} K_0^H\left(\frac{u}{e^\tau}\right).
\end{equation}

\begin{Lemma}\label{Lemma:L2}
Fix $\tau\geq0$ and $H\in(\tfrac{1}{4},\tfrac{1}{2})\cup(\tfrac{1}{2},\tfrac{3}{4})$. There exists $f_{\tau}\in L^1(\R^+,\d u)$ such that:
\begin{enumerate}
    \item[a)] For any $k\in\lbrace0,1,2\rbrace$ and $u>0$  the derivatives $\partial_H^kK_{\tau}^H(u)$ exist and for $k=1$ exhibit the limiting behaviour
\begin{equation*}
    \lim_{H\rightarrow\tfrac{1}{2}}\partial_H K_\tau^H(u)=e^{-\tfrac{\tau}{2}}\log\left(1+\frac{e^\tau}{u}\right).
\end{equation*}
\item[b)] A representative of $f_{\tau}$ can be chosen to fulfill the inequality \begin{equation*}
    \lvert\partial_H^k(K_{\tau}^H K_{0}^H)(u)\rvert\leq f_{\tau}(u) \textnormal{ for almost every }u\geq0.
\end{equation*}
\item[c)] For any $\ell\in\N$ there exists an $L\in\N$ with
\begin{equation}
    \sum_{\tau= L}^\infty\int_0^\infty f_{\tau/\ell}(u)\d u<\infty.
\end{equation}
\end{enumerate}
\end{Lemma}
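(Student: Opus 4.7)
For part (a), direct differentiation of $K_\tau^H(u) = e^{-\tau H}\bigl((e^\tau+u)^{H-1/2} - u^{H-1/2}\bigr)$ in $H$ suffices. Since this is smooth in $H$ for fixed $u > 0$ and $\tau \geq 0$, all derivatives exist, and by Leibniz's rule
\begin{equation*}
\partial_H^k K_\tau^H(u) = e^{-\tau H}\sum_{j=0}^k \binom{k}{j}(-\tau)^{k-j}\bigl[(e^\tau+u)^{H-1/2}\log^j(e^\tau+u) - u^{H-1/2}\log^j u\bigr].
\end{equation*}
At $H = \tfrac{1}{2}$ and $k = 1$ the $j=0$ summand vanishes since $(e^\tau+u)^0 - u^0 = 0$, while the $j=1$ summand evaluates to $e^{-\tau/2}\bigl(\log(e^\tau+u) - \log u\bigr) = e^{-\tau/2}\log(1 + e^\tau/u)$, giving the claimed limit.

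For part (b), I would use the scaling \eqref{eq:reduct} to write $K_\tau^H(u) K_0^H(u) = e^{-\tau/2} K_0^H(u/e^\tau) K_0^H(u)$ and expand $\partial_H^k$ via the Leibniz rule into a sum of products $\partial_H^i K_0^H(u) \cdot \partial_H^j K_0^H(u/e^\tau)$ with $i + j \leq 2$. Each factor admits the integral representation $\partial_H^i K_0^H(v) = \int_v^{v+1} \partial_s[s^{H-1/2} \log^i s] \, \d s$, from which one extracts the uniform bounds
\begin{equation*}
|\partial_H^i K_0^H(v)| \leq C\, v^{H-3/2}(1 + |\log v|^i) \quad \text{for } v \geq 1,
\end{equation*}
and
\begin{equation*}
|\partial_H^i K_0^H(v)| \leq C\bigl(v^{H-1/2}|\log v|^i + 1\bigr) \quad \text{for } v \in (0,1],
\end{equation*}
with $C$ independent of $H \in (\tfrac{1}{4}, \tfrac{3}{4}) \setminus \{\tfrac{1}{2}\}$. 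Taking $f_\tau(u)$ as the sum of the resulting envelope products over the finitely many Leibniz terms for $k=0,1,2$ then gives the pointwise domination asserted in (b); integrability of $f_\tau$ on $\R^+$ follows since $H - \tfrac{1}{2} \in (-\tfrac{1}{4},\tfrac{1}{4})\setminus\{0\}$ and the logarithmic factors are locally integrable.

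For part (c), I would estimate $\int_0^\infty f_\tau(u)\, \d u$ by splitting into the regions $u \in (0, 1]$, $u \in (1, e^\tau]$, and $u \in (e^\tau, \infty)$, using in each region the appropriate small- or large-argument envelope for $K_0^H(u)$ and for $K_0^H(u/e^\tau)$. A straightforward computation (or the substitution $v = u/e^\tau$) shows that each region contributes at most $C(1+\tau)^p e^{-\tau/4}$ for some $p \in \N$, so $\int_0^\infty f_\tau(u)\, \d u \leq C(1 + \tau^p) e^{-\tau/4}$. This exponential decay in $\tau$ makes $\sum_{\tau = L}^\infty \int_0^\infty f_{\tau/\ell}(u)\, \d u$ converge for every $\ell \in \N$ once $L$ is sufficiently large. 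The main obstacle is ensuring that the cumulative logarithmic factors, which accumulate both through differentiation in $H$ and through the identity $\log u = \tau + \log(u/e^\tau)$ under the scaling, do not overwhelm the exponential decay; the polynomial-in-$\tau$ slack in the final bound is what accommodates this.
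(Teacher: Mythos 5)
Your proposal is correct and follows essentially the same route as the paper: reduce to $K_0^H$ via the scaling identity \eqref{eq:reduct}, bound $\partial_H^i K_0^H$ by power-times-logarithm envelopes separately for small and large arguments, assemble $f_\tau$ from the Leibniz products over the three regions $(0,1]$, $(1,e^\tau]$, $(e^\tau,\infty)$, and extract exponential decay in $\tau$ for the summability in part c). The only point to tidy up is that your envelopes still carry the exponent $H-\tfrac{1}{2}$; to get a single $H$-independent dominating function one should replace them by the worst-case exponents $v^{-1/4}$ on $(0,1]$ and $v^{-3/4}$ on $(1,\infty)$, exactly as the paper does, which your remark on $H-\tfrac{1}{2}\in(-\tfrac{1}{4},\tfrac{1}{4})$ already implicitly anticipates.
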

\begin{proof}
{\bf Proof of a).} Given \eqref{eq:reduct} we can focus on $\tau=0$. Observe that
\begin{align}
    \partial_H K_0^H(u)&=  \log(1+u)(1+u)^{H-\tfrac{1}{2}}-\log(u)u^{H-\tfrac{1}{2}} \label{eqn:A}\\
    &=\log(1+u^{-1})(1+u)^{H-\tfrac{1}{2}}+\log(u)K_0^H(u) \notag \\
    \partial_H^2 K_0^H(u)&=\log(1+u)^2(1+u)^{H-\tfrac{1}{2}}-\log(u)^2u^{H-\tfrac{1}{2}} \notag \\
    &=\log(1+u^{-1})\log(u(1+u))(1+u)^{H-\tfrac{1}{2}}+\log(u)^2 K_0^H(u). \notag 
\end{align}
From (\ref{eqn:A}) and \eqref{eq:reduct}, part a) follows directly.

{\bf Proof of b).} 
We divide this into the cases $u\in(0,1]$ and $u\in(1,\infty)$.\\
\textit{The case $u\in(0,1]$:} We start by estimating $(1+u)^{H-\tfrac{1}{2}}\leq2$ and since $H\in(\tfrac{1}{4},\tfrac{3}{4})$,
    \begin{align}\label{eq:f}
        \lvert K_0^H(u)\rvert&\leq(1+u)^{H-\tfrac{1}{2}}+u^{H-\tfrac{1}{2}}
        \leq 3u^{-\tfrac{1}{4}}.
    \end{align}
    Using \eqref{eq:f} and the same arguments of its deduction again, it can be seen by applying the estimate   
    $\log\left(1+u^{-1}\right)\leq \log\left(\frac{2}{u}\right)\leq 1+\lvert\log(u)\rvert$
    that 
    \begin{align}
        \lvert\partial_HK_0^H(u)\rvert&\leq \log(1+u^{-1})(1+u)^{H-\tfrac{1}{2}}+\lvert\log(u) K_0^H(u)\rvert \notag \\
        &\leq 2(1+\lvert\log(u)\rvert)+3\lvert\log(u)\rvert u^{-\tfrac{1}{4}} \notag \\
        &\leq 5(1+\rvert\log(u)\lvert)^2u^{-\frac{1}{4}}. \label{eqn:collect2}
    \end{align}
    Similarly, we deduce 
    \begin{align}
        \lvert\partial_H^2K_0^H(u)\rvert&\leq \log(1+u^{-1})\lvert\log(u(1+u))\rvert(1+u)^{H-\tfrac{1}{2}}+\log(u)^2\lvert K_0^H(u)\rvert \notag \\
        &\leq  2(1+\lvert\log(u)\rvert)^2+3\log(u)^2u^{-\tfrac{1}{4}} \notag \\
        &\leq5(1+\lvert\log(u)\rvert)^2u^{-\tfrac{1}{4}}. \label{eqn:collect3}
    \end{align}
   \textit{The case $u\in(1,\infty)$:} Observe that (using $H\in(\tfrac{1}{4},\tfrac{3}{4})$) % for $u\geq 1$
    \begin{align}\label{eq:f2}
        \lvert K_0^H(u)\rvert&=\left| \int_0^1 \left(H-\frac{1}{2}\right)\, (z+u)^{H-\frac{3}{2}}\d z\right| \leq \lvert H-\tfrac{1}{2}\, \rvert \cdot u^{H-\tfrac{3}{2}}
        \leq 2 u^{-\tfrac{3}{4}}.
    \end{align}
    For the first derivative we see by the inequalities
    \begin{align}
        (1+u)^{H-\tfrac{1}{2}}&\leq (1+u)^{\tfrac{1}{4}}
        \leq2u^{\tfrac{1}{4}}\label{eq:plus}
    \end{align}
    and $\log(1+u^{-1})\leq u^{-1}$ that with \eqref{eq:f2} we can make the estimation
    \begin{align}
        \lvert\partial_HK_0^H(u)\rvert&\leq \log(1+u^{-1})(1+u)^{H-\tfrac{1}{2}}+\lvert\log(u) K_0^H(u)\rvert \notag \\
        &\leq 2u^{\tfrac{1}{4}}\log(1+u^{-1})+2\lvert\log(u)\rvert u^{-\tfrac{3}{4}} \notag \\
        &\leq 2(1+\log(u))u^{-\tfrac{3}{4}}. \label{eqn:collect4}
    \end{align}
Finishing with the estimate on the second derivative, by \eqref{eq:f2}, \eqref{eq:plus}, and
\begin{equation*}
    \log(u(1+u))\leq\log(2u^2)\leq2(1+\log(u)),
\end{equation*}
we get (using again $\log(1+u^{-1})\leq u^{-1}$):
    \begin{align}
        \lvert\partial_H^2K_0^H(u)\rvert&\leq\log(1+u^{-1})\log(u(1+u))(1+u)^{H-\tfrac{1}{2}}+\lvert\log(u)^2 K_0^H(u)\rvert \notag \\
        &\leq 4 (1+\log(u))u^{-\tfrac{3}{4}}+2\log(u)^2u^{-\tfrac{3}{4}} \notag \\
        &\leq 6 (1+\log u)^2 u^{-\frac{3}{4}}. \label{eqn:collect5}
    \end{align}
Putting (\ref{eq:f}), (\ref{eqn:collect2}), (\ref{eqn:collect3}) for $u\in(0,1]$ and (\ref{eq:f2}), (\ref{eqn:collect4}), (\ref{eqn:collect5}) for $u\in(1,\infty)$ shows that
\begin{equation*}
    \sup_{k\in\lbrace0,1,2\rbrace} \lvert\partial_H^k K_{0}^H(u)\rvert \leq  2^3(1+\lvert\log(u)\rvert)^2\left(\mathds{1}_{(0,1]}(u)u^{-\frac{1}{4}}+\mathds{1}_{(1,\infty)}(u)u^{-\frac{3}{4}}\right)=:f(u).
\end{equation*}

By \eqref{eq:reduct}, we can extend this estimate to
\begin{align*}
    \lvert\partial_H^k(K_{\tau}^H K_{0}^H)(u)\rvert&\leq2^k f(u)e^{-\frac{\tau}{2}}f\left(\frac{u}{e^\tau}\right),
\end{align*}
which holds for $k\in\lbrace 0,1,2\rbrace$. By further estimating the expression on the right hand side, we arrive at the following estimate: 
\begin{align*}
    &2^k f(u)e^{-\frac{\tau}{2}}f\left(\frac{u}{e^\tau}\right)\\
    &\leq 2^8(1+\tau+\lvert\log(u)\rvert)^4\left(\mathds{1}_{(0,1]}(u)u^{-\frac{1}{2}}e^{-\frac{\tau}{4}}+\mathds{1}_{\left(1,e^\tau\right]}(u)u^{-1}e^{-\frac{\tau}{4}}+\mathds{1}_{\left(e^\tau,\infty\right)}(u)u^{-\frac{3}{2}}e^{\frac{\tau}{4}}\right)\\
    &\eqcolon f_\tau(u).
\end{align*}
This function is clearly $u$-integrable for any $\tau\geq0$.

{\bf Proof of c).} We first want to change from the summation depending on $\ell$ and $L$ to an integral estimate that is independent of both. To achieve this we use the fact that for any $\ell\in\N$ we can find an $L\in\N$ such that the function $(L-1,\infty)\rightarrow\R^+_0,\,\tau\mapsto f_{\frac{\tau}{\ell}}(u)$ is monotone for any $u\geq0$. Therefore by Tonelli's Theorem,
\begin{align*}%\label{sumint}
    \sum_{\tau=L}^\infty \int_0^\infty f_{\frac{\tau}{\ell}}(u)\d u
    \leq \int_0^\infty\int_{L-1}^\infty  f_{\frac{\tau}{\ell}}(u)\d\tau\d u\nonumber
    &\leq \ell\int_0^\infty\int_{0}^\infty  f_{\tau}(u)\d u\d \tau.
\end{align*}
Integrating the three different $u$-ranges in the definition of $f_\tau$, one ends up with the following expressions, respectively, which are each clearly $\tau$ integrable:
\begin{align*}
    &\sum_{k=0}^4\binom{4}{k}e^{-\frac{\tau}{4}}(1+\tau)^k\int_0^1\lvert\log(u)\rvert^{4-k}u^{-\frac{1}{2}}\d u,\\
    &\sum_{k=0}^4\binom{4}{k}e^{-\frac{\tau}{4}}(1+\tau)^k\int_1^{e^\tau}\lvert\log(u)\rvert^{4-k}u^{-1}\d u,\\
    &\sum_{k=0}^4\binom{4}{k}e^{\frac{\tau}{4}}(1+\tau)^k\int_{e^\tau}^\infty\lvert\log(u)\rvert^{4-k}u^{-\frac{3}{2}}\d u. \qedhere
\end{align*}
\end{proof}

Prepared with these technical facts, we can now determine the limit of the correlation functions $g_H$ when $H\to\frac{1}{2}$.

\begin{proof}[Proof of Lemma~\ref{Lemma:1/2}]
As the function $H\mapsto K_\tau^H(u)$ is continuous on $(\tfrac{1}{4},\tfrac{3}{4})$ for any $\tau\geq0$ and any $u>0$, we have
\begin{equation}\label{eq:vanish}
    \lim_{H\rightarrow\tfrac{1}{2}} K_\tau^H(u)=0.
\end{equation}
By Lemma~\ref{Lemma:L2}b and dominated convergence this implies that for any $\tau\ge 0$
\begin{equation*}
    \lim_{H\rightarrow\tfrac{1}{2}}\int_0^\infty K_0^H(u)K_\tau^H(u)\d u =0.
\end{equation*} 
This allows us to apply the l'H\^{o}spital rule on the representation \eqref{integralrepresentation} and (\ref{const=int}) as follows
\begin{align}
    \lim_{H\rightarrow\tfrac{1}{2}}g_H(\tau)&=\lim_{H\rightarrow\tfrac{1}{2}}\left(\int_0^\infty K^H_0(u)^2\,\d u\right)^{-1}\int_0^\infty K^H_0(u)K^H_{\tau}(u)\d u
    \notag \\
    &=\lim_{H\rightarrow\tfrac{1}{2}}\left(\partial_H\int_0^\infty K^H_0(u)^2\,\d u\right)^{-1}\partial_H\int_0^\infty K^H_0(u)K^H_{\tau}(u)\d u. \label{eqn:10-18-+}
\end{align}
Since $\partial_H( K^H_0(u)K^H_{\tau}(u))$ has an integrable majorant (cf.\ Lemma~\ref{Lemma:L2}b), we can exchange the order of differentiation, integration as well as the limit in $H$ by the dominated convergence theorem. By applying equation \eqref{eq:vanish} in the last step we see that
\begin{align*}
    \lim_{H\rightarrow\tfrac{1}{2}}\partial_H\int_0^\infty K^H_0(u)K^H_{\tau}(u)\d u&=\int_0^\infty\lim_{H\rightarrow\tfrac{1}{2}}\partial_H( K^H_0(u)K^H_{\tau}(u))\d u\\
    &=\int_0^\infty\lim_{H\rightarrow\tfrac{1}{2}}\partial_H K^H_0(u)K^H_{\tau}(u)+ K^H_0(u)\partial_H K^H_{\tau}(u)\d u\\
    &=0.
\end{align*}
We thus need to apply l'H\^{o}spital's rule again, which yields in continuation of (\ref{eqn:10-18-+}):
\begin{align*}
    \lim_{H\rightarrow\tfrac{1}{2}}g_H(\tau)&=\lim_{H\rightarrow\tfrac{1}{2}}\left(\partial_H^2\int_0^\infty K^H_0(u)^2\,\d u\right)^{-1}\partial_H^2\int_0^\infty K^H_0(u)K^H_{\tau}(u)\d u.
\end{align*}
Analogously to the arguments above, we obtain using part a) that
\begin{align*}
    \lim_{H\rightarrow\tfrac{1}{2}}\partial_H^2\int_0^\infty K^H_0(u)K^H_{\tau}(u)\d u&=\int_0^\infty\lim_{H\rightarrow\tfrac{1}{2}}\partial_H^2( K^H_0(u)K^H_{\tau}(u))\d u\\
    &=\int_0^\infty\lim_{H\rightarrow\tfrac{1}{2}}\sum_{k=0}^2\binom{2}{k}\partial_H^k K^H_0(u)\partial_H^{2-k} K^H_{\tau}(u)\d u\\
    &=2\int_0^\infty\lim_{H\rightarrow\tfrac{1}{2}}\partial_H K^H_0(u)\partial_H K^H_{\tau}(u)\d u\\
    &=2e^{-\tfrac{\tau}{2}}\int_0^\infty\log\left(1+\frac{1}{u}\right)\log\left(1+\frac{e^\tau}{u}\right)\d u.
\end{align*}
For the latter integral at $\tau=0$ it is known that
\begin{equation*}
    \int_0^\infty \log(1+\tfrac{1}{u})^2\d u= \frac{\pi^2}{3},
\end{equation*}
which gives the normalization constant as well as the fact that the integral is finite and thus finishes the proof of the lemma.
\end{proof}

\subsection{Extending the continuity of $H\mapsto\theta(M^H)$ to $H=\tfrac{1}{2}$}
Since we have seen that the function mapping $H\in(0,\tfrac{1}{2})\cup(\tfrac{1}{2},1)$ for any $\tau\geq0$ to $g_H(\tau)$ can be continuously extended to $H=\tfrac{1}{2}$, we can utilise this in combination with Lemma~\ref{KilianLemma} similarly to the previous sections to show existence of a continuous extension of $H\mapsto\theta(M^H)$ to $H=\tfrac{1}{2}$. 
In preparation of showing the technical conditions of Lemma~\ref{KilianLemma}, we state the following lemma.
\begin{Lemma}\label{Lemma:last}
There exists $C>0$ such that for any $\tau>0$
\begin{equation*}
     g_{\ast,\tfrac{1}{2}}(\tau)\leq Ce^{-\frac{\tau}{6}}.
\end{equation*}
\end{Lemma}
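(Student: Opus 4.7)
The strategy is to exploit Hölder's inequality in the integral representation of $g_{\ast,1/2}$ in order to extract an exponential factor $e^{\tau/3}$, which combined with the prefactor $e^{-\tau/2}$ yields $e^{-\tau/6}$.

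More precisely, writing
$$ g_{\ast,1/2}(\tau) = \frac{3}{\pi^2}\, e^{-\tau/2} \int_0^\infty \log\!\Bigl(1+\tfrac{1}{u}\Bigr)\log\!\Bigl(1+\tfrac{e^\tau}{u}\Bigr)\, \d u,$$
I would apply Hölder's inequality with the pair of conjugate exponents $p=3/2$ and $q=3$ to bound the integral by
$$ \left(\int_0^\infty \log\!\Bigl(1+\tfrac{1}{u}\Bigr)^{3/2}\d u\right)^{2/3} \left(\int_0^\infty \log\!\Bigl(1+\tfrac{e^\tau}{u}\Bigr)^{3}\d u\right)^{1/3}.$$
The first factor is a finite constant: near $u=0$ the integrand grows like $|\log u|^{3/2}$ (integrable), and near $u=\infty$ it decays like $u^{-3/2}$ (integrable).

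For the second factor, I would perform the substitution $u = e^\tau v$, which gives
$$ \int_0^\infty \log\!\Bigl(1+\tfrac{e^\tau}{u}\Bigr)^{3}\d u = e^\tau \int_0^\infty \log\!\Bigl(1+\tfrac{1}{v}\Bigr)^{3}\d v,$$
and the remaining $v$-integral is again finite by the same local analysis (integrable singularity $|\log v|^3$ at $0$, decay $v^{-3}$ at infinity). Thus the second factor equals $C_3\, e^{\tau/3}$ for a finite constant $C_3$, and consequently
$$ g_{\ast,1/2}(\tau) \le \frac{3}{\pi^2}\, e^{-\tau/2}\cdot C_2 \cdot C_3 \, e^{\tau/3} = C'\, e^{-\tau/6}, \qquad \tau > 0,$$
where $C'$ is a finite constant.

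The only point that needs a separate remark is the regime of small $\tau$, but $g_{\ast,1/2}$ is continuous on $[0,\infty)$ with $g_{\ast,1/2}(0)=1$ (as computed in the proof of Lemma~\ref{Lemma:1/2}), so it is bounded on every compact subset of $[0,\infty)$; enlarging $C'$ if necessary then yields the desired inequality uniformly in $\tau>0$. I do not expect any real obstacle here: the key step is simply choosing the Hölder exponents $(3/2,3)$, which is the unique choice producing a positive decay rate while keeping both resulting $L^p$-norms of $\log(1+1/u)$ finite (note $q=2$ would give exponent $0$, and any $q\in(2,\infty)$ works but $q=3$ makes the bookkeeping clean).
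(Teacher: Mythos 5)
Your proof is correct and follows essentially the same route as the paper: the paper splits the product with Young's inequality $ab\leq p^{-1}a^p+q^{-1}b^q$ using the same conjugate pair $(p,q)=(3/2,3)$ and the same substitution $u=e^\tau v$, whereas you apply H\"older's inequality with those exponents — a cosmetic difference yielding the identical decay rate $e^{-\tau/6}$. Your closing caveat about small $\tau$ is unnecessary, since the H\"older bound already holds uniformly for all $\tau>0$.
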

\begin{proof}
We first note that for any $\delta\in[0,1)$ and $n\geq 1$ the following integral is finite 
\begin{align*}
    \int_0^\infty \log(1+\tfrac{1}{u})^{\delta+n}\d u<\infty.
\end{align*}
Then we can apply Young's inequality $a\cdot b\leq p^{-1} a^p+q^{-1} b^q$ with $p=3/2$ and $q=3$ to see 
\begin{align*}
    \frac{\pi^2}{3}\,g_{*,\frac{1}{2}}(\tau)&=\int_0^\infty e^{-\frac{\tau}{9}}\log(1+\tfrac{1}{u}) \cdot e^{-\frac{7}{9}\frac{\tau}{2}}\log(1+\tfrac{e^\tau}{u})\d u\\
    &\leq \frac{2}{3}e^{-\frac{\tau}{6}}\int_0^\infty \log\left(1+\frac{1}{u}\right)^{\frac{3}{2}}\d u+\frac{1}{3}e^{-\frac{7}{6}\tau}\int_0^\infty \log\left(1+\frac{e^\tau}{u}\right)^{3}\d u\\
    &\leq e^{-\frac{\tau}{6}}\left(\int_0^\infty \log\left(1+\frac{1}{u}\right)^{\frac{3}{2}}\d u+\int_0^\infty \log\left(1+\frac{1}{v}\right)^{3}\d v\right). \qedhere
\end{align*}
\end{proof}

\begin{proof}[Proof of Theorem~\ref{thm:continuity}, part 2 of 3]
The goal is to use Lemma~\ref{KilianLemma}, where we consider $A_H(\tau):=g_H(\tau)$, let $H\to \frac{1}{2}$, and have $A_\infty(\tau)=g_{\ast,\frac{1}{2}}(\tau)$. The pointwise convergence follows from the definition of $g_{\ast,\frac{1}{2}}$ in Lemma~\ref{Lemma:1/2}. It thus remains to verify the technical conditions of Lemma~\ref{KilianLemma}.

We start with condition (\ref{sum}). Analogous to the proof of Lemma~\ref{Lemma:1/2} we use part c) of Lemma~\ref{Lemma:L2} to legitimise the multiple exchanges in the order of limits and integration in the next computation: In particular using l'H\^{o}spital's rule, we obtain
  \begin{align*}
      \lim_{H\rightarrow\tfrac{1}{2}}\sum_{\tau=L}^\infty g_{H}(\tfrac{\tau}{\ell})&=\lim_{H\rightarrow\tfrac{1}{2}}  \left(\int_0^\infty K^H_0(u)^2\,\d u \right)^{-1} \sum_{\tau=L}^\infty \int_0^\infty  K^H_0(u)K^H_{\frac{\tau}{\ell}}(u) \d u
      \\
      &=\lim_{H\rightarrow\tfrac{1}{2}} \left(\partial^2_H \int_0^\infty K^H_0(u)^2\,\d u\right)^{-1}\partial^2_H \sum_{\tau=L}^\infty \int_0^\infty  K^H_0(u)K^H_{\frac{\tau}{\ell}}(u) \d u
      \\
      &= \left(\int_0^\infty\lim_{H\rightarrow\tfrac{1}{2}}\partial^2_H  K^H_0(u)^2\,\d u\right)^{-1}\sum_{\tau=L}^\infty \int_0^\infty \lim_{H\rightarrow\tfrac{1}{2}}\partial^2_H  K^H_0(u)K^H_{\frac{\tau}{\ell}}(u) \d u
      \\      
      &=\sum_{\tau=L}^\infty g_{*,\frac{1}{2}}(\tfrac{\tau}{\ell}).
  \end{align*}
  
  We then go on to use Lemma~\ref{Lemma:last} to see that for any $\ell\in\N$
    \begin{equation*}
        0\leq\lim_{L\rightarrow\infty}\lim_{H\rightarrow\tfrac{1}{2}}\sum_{\tau=L}^\infty g_{H}(\tfrac{\tau}{\ell})=\lim_{L\rightarrow\infty}\sum_{\tau=L}^\infty g_{\ast,\tfrac{1}{2}}(\tfrac{\tau}{\ell})\leq \lim_{L\rightarrow\infty}\sum_{\tau=L}^\infty C e^{-\frac{\tau}{6}}=0,
    \end{equation*}
so that we have verified (\ref{sum}). 
  Condition \eqref{log} is easily verified, as Lemma \ref{Lemma:int} 
  implies that for any $H\in(0,1)$ and any $\tau\geq0$ also $g_{H}(\tau)\geq e^{-\tau}$, which gives immediately \eqref{log}.
Finally, Lemma \ref{Lemma:last} implies that the estimate $g_{\ast,\tfrac{1}{2}}(\tau)\leq C e^{-\frac{\tau}{6}}$ holds. This immediately gives condition \eqref{frac}. \qedhere
\end{proof}

\begin{Lemma}
The correlation function of the GSP
$$
(\Lamp M^{\ast,\frac{1}{2}})_\tau:= \frac{1}{\sqrt{\V M^{\ast,1/2}_1}}\, e^{-\tau/2} M^{\ast,\frac{1}{2}}_{e^\tau},
$$ with $M^{\ast,\frac{1}{2}}$ defined in (\ref{eqn:defnmast}), is $g_{\ast,\frac{1}{2}}$.
The persistence exponents of $\Lamp M^{\ast,\frac{1}{2}}$ and $M^{\ast,\frac{1}{2}}$ coincide. More precisely,
$$
\theta(M^{\ast,\frac{1}{2}}):=\lim_{T\to\infty} \frac{1}{\log T} \, \log \Pro\left[ \sup_{t\in[0,T]} M^{\ast,\frac{1}{2}}_t \leq 1\right] = \lim_{T\to\infty} \frac{1}{T} \, \log \Pro\left[ \sup_{t\in[0,T]} (\Lamp M^{\ast,\frac{1}{2}})_\tau \leq 0\right].
$$
\end{Lemma}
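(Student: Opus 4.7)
The plan is to carry out the proof in two stages: first identify the correlation function of $\Lamp M^{\ast,1/2}$ with $g_{\ast,1/2}$ by a direct It\^o-isometry computation, then reduce the persistence statement to Corollary~\ref{Cor:Mol} by supplying the required RKHS element.

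\textbf{Stage 1 (covariance).} I would first check that $M^{\ast,1/2}$ is well-defined as a centred Gaussian process: a change of variables $s = tu$ gives $\int_0^\infty \log(1+t/s)^2 \, \d s = t \int_0^\infty \log(1+1/u)^2 \, \d u = t \cdot \pi^2/3$, where the value $\pi^2/3$ was already recorded at the end of the proof of Lemma~\ref{Lemma:1/2}. The same substitution shows that $M^{\ast,1/2}$ is $\tfrac{1}{2}$-self-similar, so $\V M^{\ast,1/2}_1 = \pi^2/3$ and $\Lamp M^{\ast,1/2}$ is indeed stationary of unit variance. A continuous version exists by Kolmogorov's criterion, since $\E|M^{\ast,1/2}_t - M^{\ast,1/2}_{t'}|^2 \le \tfrac{\pi^2}{3}|t-t'|$ (verified exactly as in Lemma~\ref{lem:addlemma1.3}: write the difference of logs as $\log(1 + (t-t')/(s+t'))$ and substitute $u = (s+t')/(t-t')$). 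By the It\^o isometry,
\begin{equation*}
\E\bigl[M^{\ast,1/2}_t M^{\ast,1/2}_{t'}\bigr] = \int_0^\infty \log\bigl(1+\tfrac{t}{s}\bigr)\log\bigl(1+\tfrac{t'}{s}\bigr)\, \d s,
\end{equation*}
and substituting $t = e^\tau$, $t' = 1$, multiplying by $e^{-\tau/2}$ and dividing by $\pi^2/3$ yields precisely $g_{\ast,1/2}(\tau)$.

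\textbf{Stage 2 (persistence exponents coincide).} I would apply Corollary~\ref{Cor:Mol} with $X = M^{\ast,1/2}$ and $H = \tfrac{1}{2}$. The process is centred, continuous, $\tfrac{1}{2}$-self-similar and has strictly positive covariance (the integrand is positive). The bound \eqref{eqn:masterthesis1.3} was obtained in Stage 1. It remains to exhibit $\phi \in \H_{M^{\ast,1/2}}$ with $\phi(t) \ge 1$ for all $t \ge 1$. Mimicking the $H \in (0,\tfrac{1}{2})$ construction in the proof of Lemma~\ref{lem:pecoincide}, I set
\begin{equation*}
\phi(t) := \frac{3}{\pi^2}\int_0^\infty \log\bigl(1+\tfrac{t}{s}\bigr)\log\bigl(1+\tfrac{1}{s}\bigr)\, \d s,
\end{equation*}
parametrised by the square-integrable auxiliary function $f(s) = \tfrac{3}{\pi^2}\log(1+1/s)$ (square-integrability by $\int_0^\infty \log(1+1/s)^2\,\d s = \pi^2/3$). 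For $t \ge 1$ the monotonicity $\log(1+t/s) \ge \log(1+1/s)$ gives $\phi(t) \ge \tfrac{3}{\pi^2}\int_0^\infty \log(1+1/s)^2\,\d s = 1$.

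\textbf{Expected difficulty.} No step is genuinely hard: the covariance identification is just It\^o isometry plus the one known integral $\int_0^\infty\log(1+1/u)^2\,\d u = \pi^2/3$, and the RKHS argument is a direct transcription of the $H<\tfrac{1}{2}$ case of Lemma~\ref{lem:pecoincide}. The only mildly delicate point is verifying the H\"older-type bound \eqref{eqn:masterthesis1.3} at the boundary exponent $H = \tfrac{1}{2}$, where the change of variables $u = (s+t')/(t-t')$ must be carried out carefully to produce the factor $(t-t')^{2H} = t-t'$ rather than a logarithmic correction; this is the only place where the computation requires attention.
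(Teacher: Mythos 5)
Your proposal is correct and follows essentially the same route as the paper: both reduce the persistence statement to Corollary~\ref{Cor:Mol} using the identical RKHS element $\phi(t)=\left(\int_0^\infty \log(1+\tfrac{1}{u})^2\,\d u\right)^{-1}\int_0^\infty \log(1+\tfrac{t}{u})\log(1+\tfrac{1}{u})\,\d u$ and the same monotonicity bound, with the H\"older condition \eqref{eqn:masterthesis1.3} checked by the same substitution as in Lemma~\ref{lem:addlemma1.3}. Your Stage~1 merely makes explicit the routine It\^o-isometry identification of the correlation function, which the paper leaves implicit.
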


\begin{proof} This follows from Corollary~\ref{Cor:Mol} once we have checked its conditions. Firstly, we note that $M^{\ast,\frac{1}{2}}$ is a continuous, $\frac{1}{2}$-self-similar, Gaussian process. Secondly, $M^{\ast,\frac{1}{2}}$ satisfies (\ref{eqn:masterthesis1.3}), as can be seen by exactly the same computations that one finds in the proof of Lemma~\ref{lem:addlemma1.3}. Thirdly, one has to check that there is a function $\phi$ in the RKHS of $M^{\ast,\frac{1}{2}}$ with $\phi(t)\geq 1$ for all $t\geq 1$. Such a function is given by
$$
\phi(t) := \left( \int_0^\infty \log\left(1+\frac{1}{u}\right)^2\d u\right)^{-1}\, \int_0^\infty \log\left(1+\frac{t}{u}\right)\log\left(1+\frac{1}{u}\right)\d u, 
$$
and $\phi(t)\geq 1$ for all $t\geq 1$ can  be checked by the exact same steps as in  (\ref{eqn:copyexistenceofphi}).
\end{proof}
We can now prove that that the persistence exponent $\Lamp M^{\ast,\frac{1}{2}}$ (which is the same as the one of $M^{\ast,\frac{1}{2}}$ according to the last lemma) does not vanish. Therefore, the (continuous extension of the) function $H\mapsto \theta(M^H)$ does not vanish at $H=\frac{1}{2}$. This is somehow surprising as the initial process $M^H$ does vanish at $H=\frac{1}{2}$. 

\begin{proof}[Proof of Theorem~\ref{thm:continuity}, part 3 of 3]
We prove strict positivity of the persistence exponent $\theta(M^{\ast,\frac{1}{2}})=\lim_{H\rightarrow\tfrac{1}{2}}\theta(M^H)$, the latter equality holding according to the second part of the proof.
By Lemma \ref{Lemma:last} we know that $ \int_0^\infty g_{\ast,\tfrac{1}{2}}(\tau)\d   \tau<\infty$
%\begin{equation*}
%    \int_0^\infty g_{\ast,\tfrac{1}{2}}(\tau)\d   \tau \leq C\int_0^\infty e^{-\frac{\tau}{6}}\d   \tau<\infty,
%\end{equation*}
and therefore, by Lemma $3.2$ in \cite{aurzada2023persistence}, the persistence exponent corresponding to the correlation function $g_{\ast,\frac{1}{2}}$ is strictly positive.
\end{proof}

\nocite{*}
\bibliographystyle{alpha}

\begin{thebibliography}{AGPP18}

\bibitem[AD13]{ad}
Frank Aurzada and Steffen Dereich.
\newblock Universality of the asymptotics of the one-sided exit problem for
  integrated processes.
\newblock {\em Ann. Inst. Henri Poincar\'{e} Probab. Stat.}, 49(1):236--251,
  2013.

\bibitem[AGPP18]{agpp}
Frank Aurzada, Nadine Guillotin-Plantard, and Fran\c{c}oise P\`ene.
\newblock Persistence probabilities for stationary increment processes.
\newblock {\em Stochastic Process. Appl.}, 128(5):1750--1771, 2018.

\bibitem[AK22]{aurzada2022asymptotics}
Frank Aurzada and Martin Kilian.
\newblock Asymptotics of the persistence exponent of integrated fractional
  {B}rownian motion and fractionally integrated {B}rownian motion.
\newblock {\em Theory of Probability \& Its Applications}, 67(1):77--88, 2022.

\bibitem[AM23]{aurzada2023persistence}
Frank Aurzada and Sumit Mukherjee.
\newblock Persistence probabilities of weighted sums of stationary {G}aussian
  sequences.
\newblock {\em Stochastic Processes and their Applications}, 159:286--319,
  2023.

\bibitem[AS15]{andersen2015persistence}
Frank Aurzada and Thomas Simon.
\newblock Persistence probabilities and exponents.
\newblock In {\em L\'{e}vy matters. {V}}, volume 2149 of {\em Lecture Notes in
  Math.}, pages 183--224. Springer, Cham, 2015.

\bibitem[Aur11]{aecp}
Frank Aurzada.
\newblock On the one-sided exit problem for fractional {B}rownian motion.
\newblock {\em Electron. Commun. Probab.}, 16:392--404, 2011.

\bibitem[BG90]{bouchaud}
Jean-Philippe Bouchaud and Antoine Georges.
\newblock Anomalous diffusion in disordered media: Statistical mechanisms,
  models and physical applications.
\newblock {\em Phys. Rep.}, 195(4-5):127--293, 1990.

\bibitem[BMS13]{bray2013persistence}
Alan~J Bray, Satya~N Majumdar, and Gr{\'e}gory Schehr.
\newblock Persistence and first-passage properties in nonequilibrium systems.
\newblock {\em Advances in Physics}, 62(3):225--361, 2013.

\bibitem[DM15]{dembomukherjee1}
Amir Dembo and Sumit Mukherjee.
\newblock No zero-crossings for random polynomials and the heat equation.
\newblock {\em Ann. Probab.}, 43(1):85--118, 2015.

\bibitem[DM17]{dembomukherjee2}
Amir Dembo and Sumit Mukherjee.
\newblock Persistence of {G}aussian processes: non-summable correlations.
\newblock {\em Probab. Theory Related Fields}, 169(3-4):1007--1039, 2017.

\bibitem[HP48]{hille1948functional}
Einar Hille and Ralph~S Phillips.
\newblock {\em Functional analysis and semi-groups}, volume~31.
\newblock American Mathematical Society New York, 1948.

\bibitem[Mis08]{mishura2008stochastic}
Yuliya~S. Mishura.
\newblock {\em Stochastic calculus for fractional Brownian motion and related
  processes}, volume 1929.
\newblock Springer Science \& Business Media, 2008.

\bibitem[MK00]{metzler}
Ralf Metzler and Joseph Klafter.
\newblock The random walk's guide to anomalous diffusion: a fractional dynamics
  approach.
\newblock {\em Phys. Rep.}, 339(1):77, 2000.

\bibitem[Mol99]{molchan1999maximum}
George~M. Molchan.
\newblock Maximum of a fractional {B}rownian motion: probabilities of small
  values.
\newblock {\em Communications in Mathematical Physics}, 205:97--111, 1999.

\bibitem[Mol08]{molchan2006problem}
George~M. Molchan.
\newblock Unilateral small deviations of processes related to the fractional
  {B}rownian motion.
\newblock {\em Stochastic Process. Appl.}, 118(11):2085--2097, 2008.

\bibitem[OMS09]{oldham2009atlas}
Keith~B.\ Oldham, Jan Myland, and Jerome Spanier.
\newblock {\em An atlas of functions: with equator, the atlas function
  calculator}.
\newblock Springer, 2009.

\bibitem[PR22]{pengrao}
Qidi Peng and Nan Rao.
\newblock Fractional {B}rownian {M}otion: {L}ocal {M}odulus of {C}ontinuity
  with {R}efined {A}lmost {S}ure {U}pper {B}ound and {F}irst {E}xit {T}ime from
  {O}ne-sided {B}arrier.
\newblock arXiv:2207.10247, 2022.

\bibitem[PS18]{ps}
Mihail Poplavskyi and Gr\'egory Schehr.
\newblock Exact {P}ersistence {E}xponent for the 2{D}--{D}iffusion {E}quation
  and {R}elated {K}ac {P}olynomials.
\newblock {\em Phys. Rev. Lett.}, 121(15):150601, 2018.

\bibitem[RZ14]{rosso}
Alberto Rosso and Andrea Zoia.
\newblock First passage problems in anomalous diffusion.
\newblock In {\em First-passage phenomena and their applications}, pages
  45--69. World Sci. Publ., Hackensack, NJ, 2014.

\bibitem[Sle62]{slepian1962one}
David Slepian.
\newblock The one-sided barrier problem for {G}aussian noise.
\newblock {\em Bell System Technical Journal}, 41(2):463--501, 1962.

\end{thebibliography}

\end{document}